\newtheorem{introtheorem}{Theorem}
\newtheorem{theorem}{Theorem}[section]
\newtheorem{proposition}[theorem]{Proposition}
\newtheorem{lemma}[theorem]{Lemma}
\newtheorem{corollary}[theorem]{Corollary}
\newtheorem{problem}[theorem]{Problem}
\newtheorem{hyp}[theorem]{Hypothesis}
\newtheorem{definition}[theorem]{Definition}
\newtheorem{example}[theorem]{Example}
\newtheorem{remark}[theorem]{Remark}
\def\E{{\mathcal E}}
\def\Z{{\mathbb Z}}
\def\Q{{\mathbb Q}}
\def\C{{\mathcal C}}
\def\Hom{\mathrm{Hom}}
\def\cat0{\mathrm{cat}_0}
\def\dim{\mathrm{dim}}
\def\ker{\mathrm{ker}}
\def\im{\mathrm{im}}
\def\aut{\mathrm{aut}}
\begin{document}

\title{The  Effect of Cell-Attachment on the   Group of   Self-Equivalences of an $R$-localized Space}

\author{Mahmoud Benkhalifa}
\email{makhalifa@uqu.edu.sa}

\address{Umm Al-Qura University, Mekka, Saudia Arabia}

\author{Samuel Bruce Smith}
\email{smith@sju.edu}

\address{
   Saint Joseph's University,
   Philadelphia, PA 19131}

\subjclass{55P10}
\keywords{homotopy self-equivalences, Quillen model, Anick model, $R$-local homotopy theory
Moore space, nilpotent group}


%

\begin{abstract}   Let $R \subseteq \Q$ be a ring with least non-invertible prime $p$.  Let $X = X^{n} \cup_{\alpha} (\bigcup_{j \in J}  e^{q})$ be a cell attachment with $J$ finite and $q$ small with respect to $p.$      Let $\E(X_R)$ denote the group  of  homotopy    self-equivalences    of the   $R$-localization $X_R$.  We use DG Lie models    to construct a short exact sequence  $$ 0 \to \bigoplus_{j \in J}\pi_q(X^n)_R \to  \E(X_R) \to \C^q \to 0$$  where  $\C^q$   is a subgroup of $\mathrm{GL}_{|J|}(R) \times \E(X^n_R)$.      We obtain a related result for the $R$-localization of the nilpotent group $\E_*(X)$ of classes inducing the identity on homology. We deduce  some explicit calculations of both groups   for  spaces with few cells.         \end{abstract}
 \maketitle
\section{Introduction}
 Let $X$ be a finite, simply connected  CW complex.   Let $\E(X)$ denote  the  group of homotopy equivalence classes of   homotopy self-equivalences of $X$.    Let
 $\E_*(X)$ denote the  subgroup represented by self-equivalences that induce the identity map on $H_*(X; \Z)$.    
 The study of the groups $\E(X)$ and $\E_*(X)$ by means of a cellular decomposition of $X$ is  a difficult problem with   a long history.    See Rutter \cite[Chapter 11]{Ru} for a  survey.

In \cite{DZ},    Dror-Zabrodsky proved   $\E_*(X)$ is a nilpotent group.  Maruyama \cite{Mar}  then proved $\E_*(X)_R \cong \E_*(X_R).$ These results together
 opened the door to  the use of algebraic models  for studying the localization of  nilpotent self-equivalence groups. The group  $\E_*(X)$ was studied in \cite{A-M}.   The rationalization of   the subgroup $\E_\sharp(X)$ of self-equivalences inducing the identity on homotopy groups has been studied extensively using Sullivan models (c.f.   \cite{AL1, A-S, FF}). 
  
  The  group $\E(X_\Q)$ has  emerged as a recent object of interest.  Arkowitz-Lupton \cite{AL} gave the first examples of  finite groups occurring as  $\E(X_\Q)$.  Further examples    were given by the first named author in \cite{Benk}.     Costoya-Viruel \cite{CV}  then proved  the remarkable result that every finite group $G$ occurs as    $ G \cong \E(X_\Q)$ for some finite $X$. Again, all this work was also accomplished using Sullivan models.     
The purpose of this paper is to explore the use of  Anick's and Quillen's DG Lie algebra models for     studying the groups  $\E(X_R)$ and $\E_*(X)_R.$

 We briefly recall the main result of Anick's  and Quillen's theories now in order to establish our overriding hypotheses. 
Let $R \subseteq \Q$  be a subring  with least non-invertible prime $p \in R$.   When $R=\Q$ set  $p = +\infty.$  Let $\textbf{CW}_{m}^{k+1}$ denote the category  of $m$-connected, finite
 CW complexes of dimension no greater than $k+1$ with $m$-skeleton reduced to a point. Let $\textbf{CW}_{m}^{k+1}(R)$  denote the  category obtained by $R$-localizing the spaces in
$\textbf{CW}_{m}^{k+1}.$
By   Anick \cite{An1,An2},  when $k < \text{min}(m+2p-3,mp-1)$ the homotopy category of $\textbf{CW}_{m}^{k+1}(R)$
is   equivalent to  the homotopy category of $\textbf{DGL}_{m}^{k}(R)$ consisting of  free differential graded (DG) Lie algebras  $(L(V),\partial)$   in which $V$ is a free $R$-module  satisfying $ V_{n} = 0$ for  $
n< m$ and $n > k$.  When $R = \Q$ the corresponding result for $m = 1$ and any $k$ is due to Quillen \cite{Q}. 
Summarizing, we have:
\begin{hyp} \label{eq:hyp}  {\em We assume that $R \subseteq \Q$ is a ring with least non-invertible prime $p$.  With $R$ fixed,  we take $1 \leq m < k$ satisfying $k < \text{min}(m+2p-3,mp-1)$.  By a space $X$ 
we always mean an object in $\mathbf{CW}^{k+1}_{m}$, an $m$-connected finite CW complex with top degree cells of dimension  $\leq k+1$ 
When $R= \Q$ we assume $m=1$ and $k$ is finite.}  
\end{hyp} 
Let  $X$ be an object in $\mathbf{CW}^{k+1}_{m}$.   Write $X^n$ for the $n$-skeleton of $X$.  We consider the situation in which   $$X = X^q = X^n \cup_\alpha \left( \bigcup_{i=1}^j e_i^q \right)$$   is  the space obtained by attaching  $q$-cells to  a space $X^n$ for $ n < q \leq k+1$ by a map $\alpha \colon \bigvee_{i} S^{q-1} \to X^n.$

\begin{introtheorem} \label{main}  Given  $R \subseteq \Q$  and $X$   satisfying Hypothesis \ref{eq:hyp}, there are  short exact sequences 
\begin{equation} 0 \to \bigoplus_{i=1}^{j}\pi_{q}(X^{n}_R)  \to \mathcal{E}(X_R)
 \to \mathcal{C}^{q} \to 0 \end{equation}  
\begin{equation}  0 \to  \bigoplus_{i=1}^{j} \pi_{q}(X^{n}_R)  \to \mathcal{E}_*(X_R)
 \to \mathcal{C}^q_{*} \to 0 \end{equation} 
 Here   $\mathcal{C}^{q} \subseteq \mathrm{GL}_j(R) \times \E(X^{n}_R)$.  The subgroup
 $\mathcal{C}_*^{q} \subseteq  \C^q$ is contained in $\mathrm{GL}_k(R) \times \E_*(X^{n}_R)$ with
$k$   the rank of   the linking homomorphism $\pi_{n+1}(X^{q}, X^{n})_R\to \pi_{n}(X^{n}, X^{n-1})_R$  in the long exact sequence of the triple.  In particular, $\mathcal{C}_*^{q} \subseteq  \E_*(X^{n}_R)$ when $q > n+1$.
\end{introtheorem}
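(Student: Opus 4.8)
The plan is to transport the entire problem into the category $\mathbf{DGL}^k_m(R)$ via the Anick--Quillen equivalence of Hypothesis \ref{eq:hyp} and to read off both sequences from an analysis of the automorphisms of the free Lie model. Write $(L(V),\partial)$ for the model of $X_R$. Because $X^n$ is an $n$-skeleton with $n<q$, its cells contribute generators in degrees $\leq n-1$, while the $j$ attached $q$-cells contribute generators $u_1,\dots,u_j$ in the single top degree $q-1$; thus $V=W\oplus U$ with $(L(W),\partial)$ the model of $X^n_R$, $U=\langle u_1,\dots,u_j\rangle$, and $\partial u_i\in L(W)$ encoding the attaching map $\alpha$. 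The first step is to use the equivalence to identify $\E(X_R)$ with the group of homotopy classes of DGL automorphisms of $(L(V),\partial)$, and likewise $\E(X^n_R)$ with homotopy classes of automorphisms of $(L(W),\partial)$.

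Next I would construct the quotient map. Since $U$ sits in degree $q-1$ and $W$ in degrees $\leq n-1<q-1$, every Lie element of degree $<q-1$ already lies in $L(W)$; hence any automorphism $\phi$ carries $W$ into $L(W)$ and restricts to an automorphism $\phi|_{L(W)}$, while reducing $\phi$ modulo $L(W)$ on the top generators yields a matrix $A\in\mathrm{GL}_j(R)$. This defines a homomorphism $\rho\colon\E(X_R)\to\mathrm{GL}_j(R)\times\E(X^n_R)$, $\phi\mapsto(A,\phi|_{L(W)})$, and I set $\C^q:=\im\rho$, which is a subgroup of the product by construction. The heart of the first sequence is the computation of $\ker\rho$: an automorphism with $A=I$ and $\phi|_{L(W)}\simeq\id$ may be normalized to $\phi(w)=w$, $\phi(u_i)=u_i+\theta_i$ with $\theta_i\in L(W)_{q-1}$, and the relation $\partial\phi=\phi\partial$ together with $\partial u_i\in L(W)$ forces $\partial\theta_i=0$. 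Since such automorphisms compose additively in the $\theta_i$ and two of them are DGL-homotopic exactly when the $\theta_i$ differ by boundaries, one obtains $\ker\rho\cong\bigoplus_{i=1}^j H_{q-1}(L(W))\cong\bigoplus_{i=1}^j\pi_q(X^n)_R$, which is the first short exact sequence.

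For the second sequence I would pass to the linear (cellular) data. The homology $H_*(X_R)$ is computed by the linear part $\partial_0\colon V\to V$ of the differential, and a class in $\E(X_R)$ induces the identity on $H_*(X_R)$ precisely when the map induced on the homology of $(V,\partial_0)$ is the identity. A kernel element $u_i\mapsto u_i+\theta_i$ induces the identity on $V$ itself, so $\ker\rho\subseteq\E_*(X_R)$; the two sequences therefore share the same kernel, and restricting $\rho$ to $\E_*(X_R)$ produces the second sequence with $\C^q_*=\rho(\E_*(X_R))$. To locate $\C^q_*$ I would note that $\partial_0$ sends $U$ into $W_{q-2}$, which vanishes unless $q=n+1$: when $q>n+1$ each $u_i$ survives to $H_q(X_R)$, so preservation of homology forces $A=I$ and hence $\C^q_*\subseteq\E_*(X^n_R)$, whereas for $q=n+1$ the map $\partial_0|_U$ is exactly the linking homomorphism $\pi_{n+1}(X^q,X^n)_R\to\pi_n(X^n,X^{n-1})_R$. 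Decomposing $U$ along the rank $k$ of this map confines the admissible matrices to a $\mathrm{GL}_k(R)$ block and the skeleton component to $\E_*(X^n_R)$, yielding $\C^q_*\subseteq\mathrm{GL}_k(R)\times\E_*(X^n_R)$.

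I expect the main obstacle to be the kernel computation, and specifically the homotopy-theoretic bookkeeping: proving that DGL-homotopy of two normalized automorphisms is equivalent to agreement of the cycles $\theta_i$ in $H_{q-1}(L(W))$, with no further identifications, requires a careful analysis of derivation homotopies in the range permitted by Hypothesis \ref{eq:hyp}. The subsidiary delicate point is verifying that ``identity on $H_*(X_R)$'' descends correctly onto both factors, so that the skeleton component genuinely lands in $\E_*(X^n_R)$ rather than merely in $\E(X^n_R)$; this rests on comparing the long exact homology sequence of the pair $(X,X^n)$ with the algebraic filtration by $L(W)$, and it is precisely here that the rank-$k$ refinement is extracted.
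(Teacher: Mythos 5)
Your outline is, in substance, the paper's own argument: the paper likewise writes the model as $L(V_{q-1}\oplus V_{<n})$ with $V_{<n}$ modelling $X^n_R$ and $V_{q-1}$ the top cells, sends a class $[\alpha]$ to the pair $(\widetilde{\alpha}_{q-1},[\alpha_{<n}])$, identifies the kernel with $\Hom(V_{q-1},H_{q-1}(L(V_{<n})))\cong\bigoplus_i\pi_q(X^n)_R$ (Theorem \ref{t1}), and handles $\E_*$ by splitting $V_{q-1}=(\ker d_{q-1})\oplus W_{q-1}$ along the linear differential, which is the linking homomorphism (Theorem \ref{t5}). The one genuine divergence is that you define $\C^q$ and $\C^q_*$ as images of your map $\rho$, whereas the paper defines them intrinsically as the pairs making the square with $B_{q-1}(v)=\{\partial v\}\in H_{q-2}(L(V_{<n}))$ commute (Definitions \ref{7} and \ref{C*}) and then \emph{proves} surjectivity (Propositions \ref{p3} and the argument in Theorem \ref{t5}). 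Your choice makes exactness of the quotient tautological and still proves the literal statement, since the introduction only asserts the subgroup containments; but it forfeits the explicit description of $\C^q$ that the paper uses throughout Section \ref{sec:R-local} to compute $\E(X_R)$ in examples, so the paper's extra work buys computability.

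Two steps in your plan are asserted where the paper supplies real proofs, and you should be aware they carry most of the technical weight. First, ``may be normalized to $\phi(w)=w$'' is precisely Lemma \ref{lem1}: given only $\alpha_{<n}\simeq\mathrm{id}$, one must produce a homotopic $\beta$ restricting to the identity on the nose, and the paper does this by the explicit cylinder formula $\beta(v)=\alpha(v)-F\bigl(\sum_{n\geq1}\tfrac{1}{n!}(S\circ D)^n(v)\bigr)$ together with a verification that $\beta\simeq\alpha$; once this is done your observation that $\partial\theta_i=0$ is immediate, as in the paper. Second, ``homotopic exactly when the $\theta_i$ differ by boundaries'' is Proposition \ref{p1}: one direction comes from computing $\beta'(v)-\beta(v)=\partial(F(sv))$ along a homotopy, and injectivity is not elementary bookkeeping but an appeal to Tanr\'e's criterion that maps differing by an image in an acyclic sub DG Lie algebra are homotopic. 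You correctly flag these as the obstacles, so as a roadmap the proposal is sound, but as a proof it is incomplete exactly there. Finally, neither you nor (explicitly) the paper verifies that $\rho$ is well defined on homotopy classes; this is harmless when $q>n+1$, but when $q=n+1$ a cylinder homotopy can carry suspension generators $sV_{n-1}$ into degree $q-1$, where the new generators live, so that both the matrix $A$ and the restriction class $[\alpha_{<n}]$ can a priori change within a homotopy class (consider the acyclic model of $S^2\cup_{\mathrm{id}}e^3$); if you pursue this proof you should isolate and address that point rather than treat it as routine.
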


We prove Theorem \ref{main} in Section \ref{sec:Wh}. In Section \ref{sec:R-local}, we deduce some consequences for  $R$-localized spaces. 

Given a finitely generated abelian group $G$,  let  $M(G, m)$ denote the Moore space.    
 Barcus-Barrett   \cite{B-B} proved the homology representation $\E(M(G, m)) \to \aut(G)$ is surjective and identified the kernel as an Ext-group.   
It is a classical open problem to complete this calculation  \cite[Problem 8]{Kahn} with many partial results and extensions (cf. Rutter [op. cit] and Baues \cite{Ba}). We obtain the following result in Anick's category of $R$-local spaces:

\begin{example}
\label{exmain} 
Let $G_1, \ldots, G_r$  be finitely generated abelian groups and $R \subseteq \Q$.    Let $$X=M(G_1,m+1)\vee M(G_2,m+3) \vee \cdots \vee M(G_r, m +2r-1)$$    with $r \leq m/2$ and $m, k= m+2r$ and $ R$ satisfying Hypothesis \ref{eq:hyp}.  Then 
$$
    \E(X_R)\cong \prod_{i=1}^{r} \aut((G_i)_R).
$$ 
\end{example}
We obtain a related calculation for the group $\E_*(X)_R$: 
 \begin{example} \label{exmain2}     Let $G$ be a finitely generated abelian group and $R \subseteq \Q$  
Let $X$    in $\textbf{CW}_{m}^{k+1}$  and $R$ be as in Hypothesis \ref{eq:hyp} with $X$ having a  cellular decomposition
$$M(G,m+1)=X^{m+2}\subset X^{m+3}\subset\cdots\subset X^{2m}=X.$$
Assume that the linking homomorphisms $\pi_{r+1}(X^{r+1}, X^{r})_R\to \pi_{r}(X^{r}, X^{r-1})_R$ vanish for $r = m+2, \ldots, 2m-1$.  Let $j = \dim_R(H_{2m}(X, X^{2m-1};  R))$. 
  Then
$$ \E_*(X)_R \cong \bigoplus_{i=1}^{j} [G_R, G_R].$$
 Here $[G_R, G_R]$ is additive group corresponding to the sub $R$-module  of $G_R \otimes G_R$ generated by elements of the form $x \otimes y - (-1)^m y \otimes x.$   
\end{example}
We give  a complete calculation of both groups in a simple case.  Let $R^* = \aut(R)$ denote the group of units of $R$. 
\begin{example}  
\label{exmain3}  
Let $m, m+n+1$ and $R \subseteq \Q$   satisfy Hypothesis \ref{eq:hyp}.   Then
$$\begin{array}{ll} \E \left( (S^{m+1} \times S^{n+1})_R \right) & \cong \left\{  \begin{array}{ll}  R^*  \times  R^*  & \hbox{for }  n \neq 2m \hbox{ or } m \hbox{ even.} \\
 R \oplus (R^*  \times  R^*) &  \hbox{for } n = 2m \hbox{ and } m \hbox{ odd }
\end{array} \right.
\\ \\
 \E_* (S^{m+1} \times S^{n+1})_R  & \cong \left\{  \begin{array}{ll}   0 & \hbox{for }  n \neq 2m \hbox{ or } m \hbox{ even.}  \\
R  & \hbox{for } n = 2m \hbox{ and } m \hbox{ odd }.
\end{array} \right.
\end{array}
$$  
\end{example}
We prove  two results concerning the  question of finiteness   $\E(X_R)$.  First:    \begin{theorem}
\label{finite1}  Let $R \subseteq \Q$ and   $X$ in  $\textbf{CW}_{m}^{k+1}$ be as in Hypothesis \ref{eq:hyp}.  Suppose  $\pi_{n+1}(X^{n+1}, X^{n})_R \neq 0$ and    the linking homomorphism   $\pi_{n+1}(X^{n+1}, X^{n})_R \to \pi_n(X^n, X^{n-1})_R$ vanishes.  Then  $\E(X^n_R)$   finite implies  $\E(X^{n+1}_R)$ is infinite.
\end{theorem}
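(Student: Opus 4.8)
The plan is to apply Theorem~\ref{main} to the attachment $X^{n+1} = X^n \cup_\alpha \bigcup_{i=1}^j e_i^{n+1}$, that is, with $q = n+1$. Writing $j = \dim_R \pi_{n+1}(X^{n+1},X^n)_R$, the hypothesis $\pi_{n+1}(X^{n+1},X^n)_R \ne 0$ gives $j \geq 1$, and Theorem~\ref{main} furnishes a short exact sequence $0 \to \bigoplus_{i=1}^j \pi_{n+1}(X^n)_R \to \E(X^{n+1}_R) \to \C^{n+1} \to 0$ with $\C^{n+1} \subseteq \mathrm{GL}_j(R) \times \E(X^n_R)$. Since $\E(X^n_R)$ is finite by hypothesis, it suffices to exhibit a single infinite subgroup of $\E(X^{n+1}_R)$. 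I will produce one inside the $\mathrm{GL}_j(R)$-factor, consisting of self-equivalences that fix $X^n_R$. First I record the homological input: under the relative Hurewicz isomorphism the linking homomorphism $\pi_{n+1}(X^{n+1},X^n)_R \to \pi_n(X^n, X^{n-1})_R$ is the $R$-local cellular boundary $C_{n+1} \to C_n$, so its vanishing yields $H_{n+1}(X^{n+1};R) \cong R^j$, free of rank $j \geq 1$.

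Next I would construct the self-equivalences. For a unit $u \in R^* = \aut(R)$, consider the self-map $f_u$ of $X^{n+1}_R$ restricting to the identity on $X^n_R$ and having degree $u$ on each attached cell. Such an $f_u$ exists precisely when $u\,[\alpha_i] = [\alpha_i]$ in $\pi_n(X^n)_R$ for every $i$, i.e. when $u$ fixes pointwise the $R$-submodule $W \subseteq \pi_n(X^n)_R$ generated by the attaching classes $[\alpha_1],\dots,[\alpha_j]$. When it exists, $f_u$ induces multiplication by $u$ on $H_{n+1}(X^{n+1};R)\cong R^j$ (the cells are cycles, since $d_{n+1}=0$) and the identity on all other homology, hence is an $R$-local equivalence by the $R$-local Whitehead theorem. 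Since $f_u\circ f_{u'} \simeq f_{uu'}$ and $f_u$ acts by $u$ on the nonzero free module $R^j$, the assignment $u \mapsto [f_u]$ is an injective homomorphism on the subgroup of admissible units.

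The crux is therefore to show that $U = \{u \in R^* : u|_W = \mathrm{id}\}$ is infinite. This rests on two points. The vanishing of the linking homomorphism places each $[\alpha_i]$ in the image of $\pi_n(X^{n-1})_R \to \pi_n(X^n)_R$ by exactness of the triple, so $W$ is a quotient of a submodule of $\pi_n(X^{n-1})_R$; applying Theorem~\ref{main} to the attachment $X^{n-1} \hookrightarrow X^n$ exhibits $\bigoplus \pi_n(X^{n-1})_R$ as a subgroup of the finite group $\E(X^n_R)$, forcing $\pi_n(X^{n-1})_R$, and hence the finitely generated module $W$, to be finite. Finally, the range $k < \min(m+2p-3,\,mp-1)$ in Hypothesis~\ref{eq:hyp} is nonempty only when $p \geq 3$, so $2$ is inverted in $R$ and $R^* = \aut(R)$ is infinite; as $W$ is finite, $R^*$ acts on it through a finite quotient, whence $U$ has finite index in $R^*$ and is infinite. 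Thus $\{[f_u] : u \in U\}$ is an infinite subgroup of $\E(X^{n+1}_R)$.

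I expect the main obstacle to be this finiteness of the attaching module $W$, namely marrying the vanishing of the linking map (which pushes the classes $[\alpha_i]$ down into $\pi_n(X^{n-1})_R$) with the finiteness of $\E(X^n_R)$ through Theorem~\ref{main}. A secondary point requiring care is the degenerate situation in which $X^n$ carries no $n$-cells, so that $\pi_n(X^n,X^{n-1})_R = 0$ and the linking map vanishes trivially; there one applies the same finiteness input at the first genuine skeleton below dimension $n$.
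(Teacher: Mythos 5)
Your main-case argument is correct and, at bottom, runs on the same mechanism as the paper's proof: both produce an infinite family of self-equivalences that are the identity on $X^n_R$ and a scalar $u$ on the new cells, i.e.\ pairs $(u\cdot \mathrm{id},[\mathrm{id}])$ in $\C^{n+1}$, and both exploit that Hypothesis~\ref{eq:hyp} forces $p\geq 3$, so $2\in R^*$ and $R^*$ is infinite. The difference is in execution and in one substantive point. The paper stays inside the Lie model: since $d_n=0$, degree reasons give $\partial(V_n)\subseteq L(V_{<n-1})$, and applying Theorem~\ref{t1} one level down it deduces from finiteness of $\E(X^n_R)$ that $\Hom(V_{n-1},H_{n-1}(L(V_{<n-1})))$ is finite, concludes $H_{n-1}(L(V_{<n-1}))=0$, hence $B_n=0$, so \emph{every} scalar pair lies in $\C^{n+1}$. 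You instead translate $B_n$ into the attaching classes $[\alpha_i]\in\pi_n(X^n)_R$, prove only that the submodule $W$ they generate is finite (lifting the $[\alpha_i]$ to $\pi_n(X^{n-1})_R$ via exactness, then using the kernel $\bigoplus\pi_n(X^{n-1})_R\hookrightarrow\E(X^n_R)$ from Theorem~\ref{main} applied at level $n$), and restrict to units $u\equiv 1$ modulo the exponent of $W$. That is weaker than $B_n=0$ but is all the construction needs, and it is in fact more robust over general $R\subseteq\Q$: there, finiteness of $\Hom(V_{n-1},H_{n-1}(L(V_{<n-1})))$ only forces that homology to be a torsion module, not zero (torsion at primes $\geq p$ can occur), so your congruence trick covers a point where the paper's inference ``$\E(X^n_R)$ finite $\Rightarrow H_{n-1}(L(V_{<n-1}))=0$'' is only clearly valid for $R=\Q$.

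There is, however, one genuine gap, which you flag but do not repair: both your lifting step and your finiteness step require $X^n$ to have at least one $n$-cell. Your proposed fallback --- applying the same finiteness input at the top cell dimension $s<n$ of $X^n$ --- yields via Theorem~\ref{main} only that $\pi_s(X^{s-1})_R$ is finite, a statement in degree $s$; it says nothing about $\pi_n(X^n)_R\supseteq W$. Indeed finiteness of $\E(X^n_R)$ does not force $\pi_n(X^n)_R$ to be torsion in degrees without cells (rationally, spaces with finite $\E(X_\Q)$ have plenty of nonzero higher homotopy), so in the degenerate case your admissible-unit group $U$ could a priori be finite and the construction stalls. You are in good company --- the paper's own proof tacitly assumes $V_{n-1}\neq 0$ at the corresponding spot, since $\Hom(0,H)$ is finite for any $H$ --- but as the theorem's hypotheses do not exclude this case (the linking homomorphism vanishes trivially when $\pi_n(X^n,X^{n-1})_R=0$), your proof is incomplete exactly there. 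A minor further quibble: $f_u$ is not canonical (extensions over the cells differ by the $\bigoplus\pi_{n+1}(X^n)_R$-action), so ``$u\mapsto [f_u]$ is an injective homomorphism'' is unjustified as stated; but you do not need it, since any choices of $f_u$ act by $u$ on $H_{n+1}(X^{n+1};R)\cong R^j\neq 0$, whence the classes $[f_u]$ are pairwise distinct and $\E(X^{n+1}_R)$ is infinite.
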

Finally, we  give a calculation to indicate that  finiteness  of $\E(X_R)$ requires a reasonably large CW complex $X$.   \begin{theorem}    Let $X$ be a simply-connected finite CW complex  of dimension $\leq 5.$  Let $ R \subseteq \Q$ have least invertible prime $p \geq 7.$  If $p$ is finite, assume the linking homomorphisms $\pi_{r+1}(X^{r+1}, X^{r})_R\to \pi_{r}(X^{r}, X^{r-1})_R$ vanish for $r =  2,3,  4$ vanish.     Then $\E(X_R)$ is infinite. \end{theorem}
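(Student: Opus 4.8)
The plan is to bypass the skeletal induction entirely and to work directly with the minimal DG Lie model supplied by Anick's theorem, exhibiting an explicit infinite family of ``scaling'' automorphisms of that model. First I would record that $p \geq 7$ forces Hypothesis \ref{eq:hyp} with $m = 1$ and $k = 4$, since then $k = 4 < p-1 = \min(m+2p-3,\, mp-1)$; hence $X_R$ has a free DG Lie model $(L(V), \partial)$ with $V$ a free $R$-module concentrated in degrees $1,2,3,4$, a $d$-cell contributing a generator in degree $d-1$. The assumption that the linking homomorphisms vanish for $r = 2,3,4$ amounts to the vanishing of the linear part of $\partial$ (the $R$-homology cellular boundary), so this model may be taken minimal, $\partial(V) \subseteq [L(V), L(V)]$; when $p = +\infty$ minimality is automatic. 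Under the equivalence of homotopy categories underlying Theorem \ref{main}, it then suffices to produce infinitely many DG Lie automorphisms of $(L(V), \partial)$ that remain pairwise distinct modulo homotopy.

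Next I would read off the shape of $\partial$ degree by degree. Minimality kills $\partial$ on $V_1$ and $V_2$, because there are no decomposable brackets in degrees $0$ and $1$, while in the remaining two degrees the only decomposables available are
\begin{equation*} \partial(V_3) \subseteq [V_1, V_1], \qquad \partial(V_4) \subseteq [V_1, V_2] \oplus [V_1,[V_1,V_1]]. \end{equation*}
I would then assign the weights $w(V_1) = 1$, $w(V_2) = 2$, $w(V_3) = 2$, $w(V_4) = 3$ and verify that every bracket monomial that can occur in $\partial$ is weight-homogeneous: $[V_1,V_1]$ has weight $2 = w(V_3)$, while both $[V_1,V_2]$ and $[V_1,[V_1,V_1]]$ have weight $3 = w(V_4)$. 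Consequently, for each unit $\lambda \in R^*$ the diagonal assignment $\theta_\lambda(v) = \lambda^{w(v)} v$ on generators extends multiplicatively to a Lie algebra automorphism that commutes with $\partial$, giving $\theta_\lambda \in \aut(L(V),\partial)$ with $\theta_\lambda \theta_\mu = \theta_{\lambda\mu}$.

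Finally I would conclude infiniteness. Since $p \geq 7$, the prime $2$ is invertible, so $2 \in R^*$ has infinite multiplicative order and $R^*$ is infinite. As $X$ is not contractible, the lowest-degree summand of $V$ is nonzero, and there $\theta_\lambda$ acts as multiplication by $\lambda$; because this is the induced map on indecomposables (hence on $H_*(X;R)$) it is a homotopy invariant of $\theta_\lambda$, so the classes $[\theta_\lambda] \in \E(X_R)$ are pairwise distinct. Thus $\E(X_R)$ contains a copy of $R^*$ and is infinite. I expect the genuine obstacle to lie in the weight bookkeeping of the middle paragraph: the existence of a single nontrivial weighting rendering $\partial$ homogeneous is exactly what pins the argument to $\dim X \leq 5$, since in higher dimensions the differential of a generator can mix bracket monomials of genuinely different weights and obstruct any such scaling — in accordance with the finite groups realized as $\E(X_\Q)$ for larger complexes.
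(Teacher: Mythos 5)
Your proof is correct, and it takes a genuinely more streamlined route than the paper's. The paper proceeds by cases on the top dimension ($X=X^3$, $X^4$, $X^5$), each time feeding a family of scaling pairs through the short exact sequence of Theorem \ref{t1}: for $X^5$ it first splits off the subcase $H_4(L(V_{\leq 3}))\neq 0$ (where the kernel $\Hom(V_4,H_4(L(V_{\leq 3})))$ is already infinite), and only in the complementary subcase, after deducing structural constraints ($\dim V_2\leq 1$, $\ker\partial_3=0$, $\partial_3(V_3)=[V_1,V_1]$), does it build automorphisms with exactly your weights --- $a$ on $V_1$, $a^2$ on $V_2\oplus V_3$, $a^3$ on $V_4$ --- to show $\C^5$ is infinite. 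Your observation that the weighting $w=(1,2,2,3)$ makes \emph{every} possible minimal differential in degrees $\leq 4$ homogeneous (since $\partial V_3\subseteq[V_1,V_1]$ and $\partial V_4\subseteq [V_1,V_2]\oplus[V_1,[V_1,V_1]]$ for degree reasons) collapses all of this into a single uniform family $\theta_\lambda\in\aut(L(V),\partial)$, valid whether or not $H_4(L(V_{\leq 3}))$ vanishes, and it replaces the exact-sequence bookkeeping with the elementary remark that the induced map on indecomposables (equivalently on $H_*(X_R;R)$, using $d=0$) is a homotopy invariant. What the paper's approach buys is consistency with its general machinery ($\C^q$ and Theorem \ref{t1}), which it is illustrating; what yours buys is brevity, uniformity, and a clear explanation --- via the failure of weight-homogeneity once $[V_1,V_3]$ and $[V_2,V_2]$ can mix in $\partial V_5$ --- of why the bound $\dim X\leq 5$ is where this argument stops.

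One small slip worth fixing: you assert that on the lowest-degree nonzero summand of $V$ the map $\theta_\lambda$ acts as multiplication by $\lambda$; that is only true if $V_1\neq 0$. If the bottom is $V_2$ or $V_3$ (resp.\ $V_4$) it acts by $\lambda^2$ (resp.\ $\lambda^3$). This is harmless --- taking $\lambda=2^n$ the scalars $\lambda^w$ remain pairwise distinct in $\Q^*$ for fixed $w\geq 1$, so you still get infinitely many distinct classes $[\theta_\lambda]$ --- but the sentence should be corrected. (Also, over $\Q$ minimality of a Quillen model is not automatic but must be arranged by choosing a minimal model; this matches the theorem's convention of dropping the linking hypothesis when $p=+\infty$, and your parenthetical should be phrased that way.)
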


  \section{Homotopy Self-Equivalences of DG Lie Algebras} \label{sec:Wh}
Let $(L(V),\partial )$ be an object in $\textbf{DGL}_{m}^{k}(R)$.  Recall this means  $V$ is  a  free graded $R$-module concentrated in degrees $n$ with $m \leq n \leq  k$    and $L(V)$ is the free graded Lie algebra over $R$.  Write $V_{<n} = \bigoplus_{i = m}^{n-1}   V_i.$ The differential $\partial$ is of degree $-1$.  For each $n \leq k$, $\partial$    induces a differential   $\partial_{< n}$ on  $L(V_{< n})$ making $(L(V_{<n}), \partial_{<n})$ a sub DG Lie algebra.   We write the  homology  as $H_*(L(V_{< n}))$, suppressing the differential. 
 The linear part of  $\partial$  gives a differential $d$ on  $V$.    
The  homology  $H_*(V) = H_*(V, d)$    can be identified with the graded module of
indecomposable generators  of $(L(V),\partial)$.

Homotopies between maps $\alpha, \alpha' \colon (L(V), \partial) \to (L(W), \delta)$ in $\textbf{DGL}_{m}^{k}(R)$ are defined by means of the Tanr\'{e} cylinder  (cf. \cite[Ch.II.5]{Tanre} and \cite[p.425-6]{An1}).   
Let  $$(L(V), \partial)_I = (L(V, sV, V'),D)$$  be the DG Lie algebra with $V\cong V'$ and $(sV)_{i}=V_{i-1}$.  Let  $S$ denote the derivation of degree +1 on $L(V, sV, V')$ with $S(v)=sv$ and $S(sv)=S(v')=0$. The  differential $D$ is given by
$D(v)=\partial(v), D(sv)=v'$ and $ D(v')=0$.  The degree zero derivation $\theta=D\circ S+S\circ D$ of $L(V), \partial)_I$ gives rise to an automorphism $e^\theta$ of    $L(V, \partial)_I.$ We note that for $v \in V$  
  $$e^{\theta}(v)=v+v'+\underset{n\geq 1}{\sum} \frac{1}{n!}(S\circ D)^{n}(v). $$
Define  $\alpha \simeq  \alpha'$  if there is  a DG Lie morphism  
 $$F \colon   (L(V, sV, V'),D)\to (L(V),\partial)$$
 satisfying $F(v)=\alpha(v)$ and $F\circ e^{\theta}(v)=\alpha'(v).$  

Quillen \cite{Q} and Anick \cite{An1} proved  that, under Hypothesis \ref{eq:hyp}, there is an assignment $X \mapsto (L(V), \partial)$  setting up an    equivalence between the homotopy categories of  $\textbf{CW}_m^{k+1}(R)$ and $\textbf{DGL}_{m}^{k}(R)$.   The model $(L(V), \partial)$ recovers  $R$-local homotopy invariants of $X$ via   isomorphisms  (with shifts)
$$ \widetilde{H}_*(X; R) \cong H_{*-1}(V) \hbox{\, and \, } \pi_*(X)_R \cong H_{*-1}(L(V)).$$  
As for self-equivalence groups, their results directly imply identifications:    $$\E(X_R) \cong \frac{\aut(L(V), \partial))}{\simeq} \hbox{\, and \, } \E_*(X_R) \cong \frac{\aut_*(L(V), \partial))}{\simeq}$$ 
Here $\aut(L(V), \partial)$ is the group    of DG Lie homotopy self-equivalences  and the subgroup  $\aut_*(L(V), \partial)$ consists of  maps inducing the identity   automorphism of $H_*(V).$
  We  write $$\E(L(V), \partial) = \aut(L(V), \partial)/\simeq \hbox{\, and \, } \E_*(L(V), \partial) = \aut_*(L(V), \partial)/\simeq$$ for these algebraic equivalence groups.

 Now consider a cellular attachment  $X = X^{n} \cup_{\alpha} (\bigcup_{i =1}^{j} e_i^{q})$ as in Theorem \ref{main}.   Let $(L(V), \partial)$ denote the DG Lie algebra model for $X$.  Then $V = V_{q-1} \oplus V_{< n}$ where $\dim V_{q-1} = j,$  the number of $q$-cells attached. A homotopy self-equivalence $f \colon X \to X$ induces a DG Lie algebra isomorphism $\alpha \colon (L(V), \partial) \to (L(V), \partial).$ Let $\widetilde{\alpha}_{q-1} \colon V_{q-1} \to V_{q-1}$ denote the map induced on $V_{q-1}$ by   restricting $\alpha$  and then projecting to $V_{q-1}$.   Let  $\alpha_{< n} \colon (L(V_{< n}, \partial_{< n}) \to (L(V_{< n}), \partial_{< n})$ denote the DG Lie   algebra restriction map.  We then obtain a commutative square of the form: 
$$\xymatrix{ V_{q-1} \ar[rr]^{\widetilde{\alpha}_{q-1}} \ar[d]^{B_{q-1}} &&  V_{q-1} \ar[d]^{B_{q-1}} \\
H_{q-2}(L(V_{< n})) \ar[rr]^{H(\alpha_{< n})} && H_{q-2}(L(V_{< n}))}
$$
Here  $B_{q-1} \colon V_{q-1} \to H_{q-2}(L(V_{<  n}))$  is given by $$B_{q-1}(v_{q-1})= \{ \partial_{q-1}(v_{q-1}) \} \in H_{q-2}(L(V_{<  n})).
$$
where $\left\{  \ \  \right\}$    denotes the homology class of a cycle. 
We use this diagram to define the group $\C^q$.   Given a self-equivalence $\alpha$ of $L(V)$ we write $[ \alpha]$ for the homotopy equivalence class in $\E(L(V), \partial).$  
\begin{definition}
\label{7}
 Let  $\C^{q}$ the subset of pairs  $(\xi, [\gamma]) \in \aut(V_{q-1}) \times \E(L(V_{< n}), \partial_{<n})$ such that   the following  diagram is commutative:
$$\xymatrix{ V_{q-1} \ar[rr]^{\xi} \ar[d]^{B_{q-1}} &&  V_{q-1} \ar[d]^{B_{q-1}} \\
H_{q-2}(L(V_{< n})) \ar[rr]^{H(\gamma)} && H_{q-2}(L(V_{< n}))}
$$
\end{definition}
\begin{proposition}
\label{p2}
 $\C^{q}$ is a subgroup of $\aut(V_{q-1}) \times \E(L(V_{< n}), \partial_{< n})$
\end{proposition}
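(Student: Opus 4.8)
The plan is to exhibit $\C^{q}$ as the preimage of a subgroup under a group homomorphism, so that it is automatically a subgroup. The first and essential step is to check that the assignment $[\gamma] \mapsto H(\gamma)$ gives a well-defined group homomorphism
$$\rho \colon \E(L(V_{< n}), \partial_{< n}) \to \aut\big(H_{q-2}(L(V_{< n}))\big).$$
Well-definedness on homotopy classes is the crux of the matter: I must verify that homotopic self-maps of $(L(V_{<n}), \partial_{<n})$ induce the \emph{same} endomorphism of $H_{q-2}(L(V_{< n}))$. This follows from the Tanr\'{e} cylinder $(L(V_{<n}),\partial_{<n})_I$ recalled above; given a DG Lie homotopy $F$ between $\gamma$ and $\gamma'$, restriction of $F$ supplies the chain homotopy showing $H(\gamma) = H(\gamma')$. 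That $\rho$ takes values in $\aut$ rather than merely in endomorphisms uses that a homotopy self-equivalence induces an isomorphism on homology, and functoriality of homology yields both $\rho([\gamma_1][\gamma_2]) = \rho([\gamma_1])\,\rho([\gamma_2])$ and $\rho([\mathrm{id}]) = \mathrm{id}$.

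Next I would set up an auxiliary action. The group $\aut(V_{q-1}) \times \aut\big(H_{q-2}(L(V_{<n}))\big)$ acts on $\Hom\big(V_{q-1}, H_{q-2}(L(V_{<n}))\big)$ by $(\xi, \eta)\cdot \phi = \eta \circ \phi \circ \xi^{-1}$, and one checks routinely that this is a left action, so the stabilizer $S$ of the distinguished element $B_{q-1}$ is a subgroup. Unwinding the definition, $(\xi,\eta) \in S$ exactly when $\eta \circ B_{q-1} = B_{q-1} \circ \xi$, which is precisely the commutativity condition of Definition \ref{7} with $\eta = H(\gamma)$.

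Finally I would assemble the pieces. The product map $\mathrm{id} \times \rho \colon \aut(V_{q-1}) \times \E(L(V_{<n}), \partial_{<n}) \to \aut(V_{q-1}) \times \aut\big(H_{q-2}(L(V_{<n}))\big)$ is a group homomorphism, and by construction $\C^{q} = (\mathrm{id} \times \rho)^{-1}(S)$, a preimage of a subgroup under a homomorphism, hence a subgroup. For readers who prefer a direct verification, the same content is the elementary computation: closure under composition reads $B_{q-1} \circ (\xi_1 \xi_2) = H(\gamma_1) \circ B_{q-1} \circ \xi_2 = H(\gamma_1)H(\gamma_2) \circ B_{q-1} = H(\gamma_1 \gamma_2) \circ B_{q-1}$ by functoriality, and closure under inverses follows by composing the defining identity with $H(\gamma)^{-1} = H(\gamma^{-1})$. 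I expect the only genuine obstacle to lie in the well-definedness of $\rho$ in the first step; once that is in hand, everything downstream is formal.
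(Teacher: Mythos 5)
Your proof is correct. The paper's own proof consists of the single word ``Straightforward,'' and the direct verification you append at the end --- closure via functoriality, $H(\gamma_1\gamma_2)=H(\gamma_1)H(\gamma_2)$, and inverses via $H(\gamma)^{-1}=H(\gamma^{-1})$ --- is exactly the elementary check the authors are waving at; your packaging of it as $\C^{q}=(\mathrm{id}\times\rho)^{-1}(S)$ for the stabilizer $S$ of $B_{q-1}$ under the action $(\xi,\eta)\cdot\phi=\eta\circ\phi\circ\xi^{-1}$ is an equivalent, slightly more structured formulation, so the two routes coincide in substance. You are also right to isolate the one non-formal ingredient, namely that $[\gamma]\mapsto H(\gamma)$ is well defined on homotopy classes, and your appeal to the Tanr\'{e} cylinder closes it: since $\theta=D\circ S+S\circ D$ commutes with $D$, one has $e^{\theta}-\mathrm{id}=D\circ h+h\circ D$ with $h=\sum_{n\geq 1}\frac{1}{n!}\,\theta^{n-1}\circ S$, so for a homotopy $F$ from $\gamma$ to $\gamma'$ the map $F\circ h$ restricted to $L(V_{<n})$ is a chain homotopy, giving $\gamma'-\gamma=\partial_{<n}\circ(F\circ h)+(F\circ h)\circ\partial_{<n}$ and hence $H(\gamma)=H(\gamma')$. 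This homotopy invariance is tacitly used throughout the paper (it is what makes the bottom arrow $H(\gamma)$ of Definition~\ref{7} depend only on the class $[\gamma]\in\E(L(V_{<n}),\partial_{<n})$), so making it explicit strengthens rather than deviates from the intended argument.
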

\begin{proof}
Straightforward.
\end{proof}
Define $\Psi_{q}\colon   \E(L(V_{q-1}\oplus V_{<
n}), \partial) \to \C^q$ by setting:
$$
    \Psi_{q}([\alpha])=(\tilde{\alpha}_{q-1},[\alpha_{< n}])
$$
\begin{proposition}
\label{p3}
The map  $\Psi_{q}$   is a surjective homomorphism
\end{proposition}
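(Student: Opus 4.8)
The plan is to verify three things in turn: that $\Psi_q$ descends to homotopy classes, that it respects composition, and that it is surjective. Throughout I would exploit the degree constraint coming from $q-1\geq n$: since every generator in $V_{q-1}$ has degree strictly larger than every generator in $V_{<n}$, for $v\in V_{<n}$ the element $\alpha(v)$ has degree $<n\leq q-1$ and so cannot involve $V_{q-1}$; hence $\alpha(L(V_{<n}))\subseteq L(V_{<n})$ and the restriction $\alpha_{<n}$ is defined. Likewise, for $v\in V_{q-1}$ any bracket term in $\alpha(v)$ involving a $V_{q-1}$-generator has degree $>q-1$, so one may write $\alpha(v)=\widetilde{\alpha}_{q-1}(v)+\omega$ with $\widetilde{\alpha}_{q-1}(v)\in V_{q-1}$ and $\omega\in L(V_{<n})_{q-1}$ decomposable.

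Granting well-definedness, the homomorphism property is the easiest step. For $v\in V_{q-1}$ I would substitute $\beta(v)=\widetilde{\beta}_{q-1}(v)+\omega_\beta$ into $\alpha$, observe that $\alpha(\omega_\beta)=\alpha_{<n}(\omega_\beta)$ remains decomposable in $L(V_{<n})$ and so contributes nothing linear in $V_{q-1}$, and read off $\widetilde{(\alpha\beta)}_{q-1}=\widetilde{\alpha}_{q-1}\widetilde{\beta}_{q-1}$; on the lower part $(\alpha\beta)_{<n}=\alpha_{<n}\beta_{<n}$ by the same degree argument, and these pass to homotopy classes. Since multiplication in $\C^q\subseteq\aut(V_{q-1})\times\E(L(V_{<n}),\partial_{<n})$ is componentwise, $\Psi_q$ is multiplicative.

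For surjectivity I would proceed by lifting. Given $(\xi,[\gamma])\in\C^q$, fix a representative automorphism $\gamma$ of $(L(V_{<n}),\partial_{<n})$ and set $\alpha|_{V_{<n}}=\gamma$. On a basis of $V_{q-1}$ I define $\alpha(v)=\xi(v)+\omega(v)$ and solve for the correction $\omega(v)\in L(V_{<n})_{q-1}$ from the chain-map equation $\partial\,\omega(v)=\gamma(\partial_{q-1}v)-\partial_{q-1}(\xi(v))$. Both sides are cycles in $L(V_{<n})$, and the commuting square defining $\C^q$ gives $\{\gamma(\partial_{q-1}v)\}=H(\gamma)(B_{q-1}(v))=B_{q-1}(\xi(v))=\{\partial_{q-1}(\xi(v))\}$, so the right-hand side is a boundary and $\omega(v)$ exists. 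Extending as a free Lie map yields a DG Lie endomorphism $\alpha$ whose induced map on indecomposables is block-diagonal with blocks the linearization of $\gamma$ and $\xi$, both isomorphisms; hence $\alpha$ is an automorphism with $\Psi_q([\alpha])=(\xi,[\gamma])$.

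The main obstacle is well-definedness, specifically the homotopy-invariance of $\widetilde{\alpha}_{q-1}$ (invariance of $[\alpha_{<n}]$ follows by restricting a Tanr\'e homotopy to the sub-cylinder $L(V_{<n},sV_{<n},V'_{<n})$ via the same degree bound). Given a homotopy $F$ with $F(v)=\alpha(v)$ and $F(e^{\theta}v)=\alpha'(v)$, I would expand $\alpha'(v)=\alpha(v)+F(v')+\sum_{k\geq 1}\frac{1}{k!}F((SD)^k v)$ for $v\in V_{q-1}$ and extract the $V_{q-1}$-linear part. The term $F(v')=\partial F(sv)$ is a $\partial$-boundary, and since $V$ has no generators in degree $q$ a boundary carries no $V_{q-1}$-linear part; the iterated terms $(SD)^k v$ with $k\geq 2$ are decomposable and contribute nothing linear. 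The single delicate contribution is the suspension term $F(S\,\partial v)$, whose bare part $F(s\,dv)$ (with $d$ the linear part of $\partial$ on $V_{q-1}$) can a priori have a $V_{q-1}$-component. I expect the real work to lie here: showing this contribution does not affect $\widetilde{\alpha}_{q-1}$ forces one to confront the case $q=n+1$, where $d$ is exactly the linear map dual to the linking homomorphism $\pi_{n+1}(X^{n+1},X^n)_R\to\pi_n(X^n,X^{n-1})_R$. When $q>n+1$ the difficulty is vacuous, since $V_{q-2}=0$ makes $d=0$ and $L(V)_{q-1}$ contains no degree-$(q-1)$ suspension of $V_{<n}$; the substance of the argument is the analysis of $F$ on the sub-cylinder needed to control the top-boundary case.
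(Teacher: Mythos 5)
Your multiplicativity and surjectivity arguments are, in substance, the paper's own. In particular your lifting argument for surjectivity is identical to the published proof: on a basis $\{v_i\}$ of $V_{q-1}$ the commuting square gives $(\gamma\circ\partial-\partial\circ\xi)(v_i)\in\im\,\partial_{<n}$, one chooses $u_i$ (your $\omega(v_i)$) with $\partial_{<n}(u_i)=(\gamma\circ\partial-\partial\circ\xi)(v_i)$, sets $\alpha(v_i)=\xi(v_i)+u_i$ and $\alpha=\gamma$ on $V_{<n}$, and verifies $\partial\circ\alpha=\alpha\circ\partial$; your remark that the linearization of $\alpha$ is block-triangular with invertible blocks is exactly what lies behind the paper's ``$\alpha$ is clearly an automorphism,'' and your componentwise degree bookkeeping fills in what the paper dismisses as ``easy to see.''

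The genuine gap is the step you flag and then defer: well-definedness of $\Psi_q$ on homotopy classes when $q=n+1$. You correctly isolate the dangerous term $F(s\,dv)$, but the ``real work'' you postpone cannot in fact be done, because invariance genuinely fails there. Concretely, take $V_{<n}=R\langle x\rangle$ with $|x|=n-1$, $V_{q-1}=R\langle w\rangle$ with $|w|=n$, and $\partial w=x$ (a model of the disk $D^{n+1}$; Anick models carry the cellular differential as linear part, so this is admissible). The DG Lie map $F$ with $F(x)=x$, $F(w)=w$, $F(sx)=w$, $F(sw)=0$, hence $F(x')=x$ and $F(w')=0$, is a homotopy from the identity to the automorphism multiplying every generator by $2$ (invertible, since Hypothesis \ref{eq:hyp} forces $p\geq 3$); thus neither $\widetilde{\alpha}_{q-1}$ nor $[\alpha_{<n}]$ is constant on homotopy classes. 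Your parenthetical claim that invariance of $[\alpha_{<n}]$ follows ``by the same degree bound'' also breaks precisely at $q=n+1$: for $v\in V_{n-1}$ the element $F(sv)$ has degree $q-1$ and may hit $V_{q-1}$ linearly, and this second component can fail even when $d_{q-1}=0$ --- e.g.\ in $L(a,b,x,w)$ with $\partial w=[a,b]$ and $|x|=n-1$, setting $F(sx)=w$ exhibits $x\mapsto x+[a,b]$ as homotopic to the identity on $L(V)$, although its restriction to $L(V_{<n})$ is not homotopic to the identity there. The paper's proof shares this defect --- well-definedness of $\Psi_q$ is nowhere addressed --- so on the two points the paper actually argues you agree with it exactly, while on the point you alone raise, the honest conclusion is that the proposition (and with it Theorem \ref{t1}) is secured by your degree analysis only for $q>n+1$; the case $q=n+1$ requires extra hypotheses of the kind the authors themselves invoke elsewhere, namely vanishing of the linear part $d_{q-1}$ as in Corollary \ref{main3} and the device of $W_{q-1}$ in Theorem \ref{t5}, and even then the $[\alpha_{<n}]$-component needs an argument. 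Your write-up should state this restriction explicitly rather than defer it.
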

\begin{proof}
It is easy to see $\Psi_q$ is a homomorphism.  We prove surjectivity. Let $(\xi, [\gamma]) \in\C^{q}$. Choose $\{v_{i}\}_{i\in J}$ as a basis of $V_{q-1}$.  From the hypothesized commutative diagram we obtain   
$(\gamma \circ\partial-\partial\circ\xi)(v_{i}) \in \im
\,\partial_{< n}.$    Choose $u_{i}\in
L( V_{< n})$ of degree $q-1$ with
$
(\gamma \circ\partial-\partial\circ\xi)(v_{i})=\partial_{<
n}(u_{i}).
$
Define $\alpha \colon   (L( V_{q-1}\oplus V_{< n} )
,\partial)\rightarrow(L( V_{q-1}\oplus V_{< n}),\partial )$  by setting
$
\alpha(v_{i})=\xi(v_{i})+u_{i}$  for 
$v_{i}\in V_{q-1}$ and $\alpha=\gamma \text{ on }V_{< n}.
$ and then extending. Then $\alpha$ is clearly an automorphism of $L(V)$.  Observe   \begin{eqnarray*}
\partial\circ\alpha(v_{i})=\partial(\xi(v_{i}))+\partial_{< n}(u_{i})=\gamma \circ\partial(v_{i})= 
\alpha\circ\partial(v_{i})
\end{eqnarray*}
Thus  $\alpha$ represents a class in  $\E(L(V), \partial))$  satisfying  $\Psi_q([\alpha]) = (\xi, [\gamma]).$
\end{proof}

We next identify  
$$\ker\Psi_q =\Big\{[\alpha]\in \mathcal{E}(L(
V), \partial) \mid \widetilde{\alpha}_{q-1}=\mathrm{id}_{V_{q-1}}, \,\,\,\, \alpha_{< n} \simeq \mathrm{id}_{L(V_{< n})} \Big\}.
$$
Let $[\alpha] \in \ker \,  \Psi_q. $ That  $\widetilde{\alpha}_{q-1} = \mathrm{id}_{V_{q-1}}$ means  for all $v \in V_q$ we have $\alpha(v) - v \in L^{q-1}(V_{< n}).$ Here $L^{q-1}(V_{<n})$ denotes the space of elements of $L(V_{<n})$ of degree $q-1.$ Define  
$$\varphi_\alpha \colon V_{q-1} \to L^{q-1}(V_{< n}) \hbox{\, by \, } \varphi_\alpha(v)= \alpha(v)-  v \in L^{q-1}(V_{< n}) \hbox{\, for \,} v \in V_{q-1}.$$  
We next prove that the class $[\alpha]$ has a representative $\beta$ such that  $\varphi_\beta(V)$ 
is contained in the cycles of $L^{q-1}(V_{<n})$:  \begin{lemma}  \label{lem1} Let $[\alpha] \in \ker \,  \Psi_q$.  Then there exists $[\beta] \in \ker \, \Psi_q$   satisfying 
 \begin{itemize}  
\item[{\em (i)}] $\partial(\varphi_\beta(v)) = 0$ for all $v \in V_{q-1}$   
 \item[{\em (ii)}]  $\beta_{< n} = \mathrm{id}_{L(V_{<
   n})}.$   
   \item[{\em (iii)}]  $\alpha \simeq \beta.$
 \end{itemize}
 \end{lemma}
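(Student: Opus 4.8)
The plan is to secure (ii) and (iii) first and then read off (i) for free. The key observation is that once a representative $\beta \simeq \alpha$ is found restricting to the identity on $L(V_{<n})$, condition (i) is automatic: since $\beta$ is a chain map and $\partial v \in L(V_{<n})$ for every $v \in V_{q-1}$, we get $\partial(\beta(v)) = \beta(\partial v) = \partial v$, whereas $\beta(v) = v + \varphi_\beta(v)$ gives $\partial(\beta(v)) = \partial v + \partial(\varphi_\beta(v))$; comparing yields $\partial(\varphi_\beta(v)) = 0$. So the whole problem reduces to rigidifying the homotopy $\alpha_{<n} \simeq \mathrm{id}$ into an honest equality on the lower skeleton while dragging $\alpha$ along.

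To build $\beta$, I would start from the Tanr\'e homotopy realizing $\alpha_{<n} \simeq \mathrm{id}_{L(V_{<n})}$, that is, a DGL morphism $G \colon (L(V_{<n}),\partial_{<n})_I \to (L(V_{<n}),\partial_{<n})$ with $G(w) = \alpha_{<n}(w)$ and $G(e^\theta(w)) = w$ for $w \in V_{<n}$. I then extend $G$ across the top cells to a homotopy $F \colon (L(V),\partial)_I \to (L(V),\partial)$ by declaring $F = G$ on the sub-cylinder generated by $V_{<n}, sV_{<n}, V'_{<n}$ and, on the new generators, setting $F(v) = \alpha(v)$, $F(sv) = 0$, $F(v') = 0$ for $v \in V_{q-1}$. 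The only compatibility to check is that $F$ is a chain map on generators; the single nontrivial identity is $F(Dv) = F(\partial v) = G(\partial v) = \alpha_{<n}(\partial v)$ versus $\partial(Fv) = \partial(\alpha v) = \alpha(\partial v)$, and these agree because $\partial v \in L(V_{<n})$, where $G$ and $\alpha$ both restrict to $\alpha_{<n}$; the remaining conditions $F(D(sv)) = \partial(F(sv))$ and $F(Dv') = \partial(F(v'))$ hold trivially with the chosen zero values. Defining $\beta$ as the time-one map $\beta(w) = F(e^\theta(w))$ then gives $\beta|_{V_{<n}} = \mathrm{id}$, hence (ii), while $F$ itself witnesses $\alpha \simeq \beta$, hence (iii).

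The step I expect to be the real obstacle is confirming that this $\beta$ is a genuine automorphism of $L(V)$ — so that $[\beta]$ is a legitimate class with $[\beta] = [\alpha] \in \ker \Psi_q$ — rather than merely a chain endomorphism homotopic to $\alpha$. Here I would argue by the skeletal filtration: expanding $e^\theta(v) = v + v' + \sum_{k \geq 1}\tfrac{1}{k!}(S\circ D)^k(v)$ and noting that each $(S\circ D)^k(v) \in L(V_{<n}, sV_{<n}, V'_{<n})$ since $\partial v \in L(V_{<n})$, one finds $\beta(v) = \alpha(v) + G\!\left(\sum_{k\geq 1}\tfrac{1}{k!}(S D)^k v\right) \equiv v \pmod{L(V_{<n})}$ for $v \in V_{q-1}$, while $\beta = \mathrm{id}$ on $L(V_{<n})$. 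Thus $\beta$ is unitriangular for the filtration placing $L(V_{<n})$ below $V_{q-1}$, induces the identity on the associated graded, and is therefore invertible. This also shows $\widetilde{\beta}_{q-1} = \mathrm{id}_{V_{q-1}}$, so $[\beta] \in \ker \Psi_q$, and condition (i) follows from the chain-map computation above.
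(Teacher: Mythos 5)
Your proposal is correct and takes essentially the same route as the paper: the paper likewise extends the skeletal homotopy over the top generators by sending $v \mapsto \alpha(v)$ and $sv, v' \mapsto 0$, and its explicit formula $\beta(v) = \alpha(v) - F\bigl(\sum_{k\geq 1}\tfrac{1}{k!}(S\circ D)^{k}(v)\bigr)$ is exactly your time-one endpoint up to the (harmless) reversal of the homotopy's direction. The only organizational difference is that the paper defines $\beta$ by this formula first and verifies (i) by direct computation---a computation which simultaneously shows $\beta$ is a DGL morphism, the point your unitriangularity and chain-map observations handle instead---so your streamlined derivation of (i) from (ii) is a valid, mildly slicker packaging of the same argument.
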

 \begin{proof} Since $\alpha_{< n} \simeq \mathrm{id}_{L(V_{<
   n})}$ there is a homotopy $F \colon (L(V_{<n}), \partial_{< n})_I \to (L(V_{<n}), \partial_{< n})$
   satisfying $F(v) = v$ and $F \circ e^\theta(v) = \alpha_{< n}$ for $v \in V_{< n}$.  Define $\beta$ by setting 
   $$ \beta(v) = \left\{ \begin{array}{ll} v & \hbox{for \, } v \in V_{<n} \\
   \alpha(v) - F\left(\sum_{n \geq 1} \frac{1}{n!} (S \circ D)^n(v) \right) & \hbox{for \, } v \in V_{q-1}. 
   \end{array} \right.
   $$
  Given $v \in V_{q-1}$ we compute: 
   $$ \begin{array}{lll} 
\partial(\varphi_\beta(v)) &=& \partial\left(\alpha(v) - F\left(\sum_{n \geq 1} \frac{1}{n!} (S \circ D)^n(v) \right) - v\right) \\  
&=& \alpha_{<n}(\partial( v)) - \partial \circ F\left(\sum_{n \geq 1} \frac{1}{n!} (S \circ D)^n(v) \right)  
- \partial(v) \\  
& = & F  \circ e^{\theta}(\partial (v)) -    F \circ D \left(\sum_{n \geq 1} \frac{1}{n!} (S \circ D)^n(v) \right)  
- \partial(v) \\  
& = & F \circ D \circ e^{\theta}(v) -    F \circ D \left(\sum_{n \geq 1} \frac{1}{n!} (S \circ D)^n(v) \right)  
- \partial(v) \\  
& =& F\circ D(v+v') - \partial(v) \\  & = & 0.
\end{array} 
 $$
 Thus $\beta$ satisfies (i).  For (ii), we define $G \colon (L(V), \partial)_I \to (L(V), \partial)$ by setting $G = F$ on $(L(V_{< n}), \partial_{<n})_I$ while, for $v \in V_{q-1},$ we set
 $G(v) = \beta(v) \hbox{\, and \, }  G(v')= G(sv) = 0.$
 It is easy to check that $G$ is a DG Lie algebra map.  Given $v \in V_{q-1}$, we have
 $$\begin{array}{lll}
 G \circ e^\theta(v) & = &  G\left(v + v' + \sum_{n \geq 1}\frac{1}{n!} (S \circ D)^n(v)\right) \\  
 & = & G(v) + G\left(  \sum_{n \geq 1}\frac{1}{n!} (S \circ D)^n(v)\right) \\  
 & = & \beta(v) + F\left(  \sum_{n \geq 1}\frac{1}{n!} (S \circ D)^n(v)\right) \\  
 & = & \alpha(v).
 \end{array}$$
  \end{proof} 

Using Lemma \ref{lem1} (i), we define a map 
$$\Theta_q \colon   \ker \Psi_{q}\to
\Hom(V_{q-1}, H_{q-1}(L(V_{< n})))\hbox{\, by \, }   \Theta_q([\beta])(v)=\{\varphi_\beta(v)\} \text{\, for \,} v \in V_{q-1}$$ where $\beta$ is chosen  as in Lemma \ref{lem1}.  \begin{proposition}
\label{p1} The map $$\Theta_q \colon   \ker \Psi_{q}\to
\Hom(V_{q-1}, H_{q-1}(L(V_{< n})))$$  is an isomorphism.
\end{proposition}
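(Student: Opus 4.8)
The plan is to show $\Theta_q$ is a well-defined isomorphism by verifying four things: it is a homomorphism, it is surjective, it is injective, and its value $\{\varphi_\beta(v)\}$ is independent of the representative $\beta$ chosen via Lemma~\ref{lem1}. The homomorphism property is the cleanest input and I would record it first. A normalized representative $\beta$ (one satisfying Lemma~\ref{lem1}(i),(ii)) is determined as a strict map by the cycles $\varphi_\beta(v)=\beta(v)-v\in L^{q-1}(V_{<n})$, and since $\beta=\mathrm{id}$ on $L(V_{<n})$ it fixes each $\varphi_{\beta'}(v)$. Hence for two such representatives $\beta,\beta'$ one computes $(\beta\circ\beta')(v)=\beta(v+\varphi_{\beta'}(v))=v+\varphi_\beta(v)+\varphi_{\beta'}(v)$, so that $\varphi_{\beta\circ\beta'}=\varphi_\beta+\varphi_{\beta'}$ and $\Theta_q$ is a homomorphism into the additive group $\Hom(V_{q-1},H_{q-1}(L(V_{<n})))$. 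In particular $\varphi_{\beta^{-1}}=-\varphi_\beta$, which lets me reduce questions about two representatives to the single map $\beta'\circ\beta^{-1}$.

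For surjectivity I would mimic the surjectivity argument for $\Psi_q$ one degree higher. Given $\phi\in\Hom(V_{q-1},H_{q-1}(L(V_{<n})))$, choose for each basis vector $v_i$ a cycle $z_i\in L^{q-1}(V_{<n})$ with $\{z_i\}=\phi(v_i)$, and set $\beta(v_i)=v_i+z_i$ and $\beta=\mathrm{id}$ on $V_{<n}$. Because $z_i$ is a cycle and $\partial v_i\in L(V_{<n})$ is fixed by $\beta$, the map $\beta$ commutes with $\partial$; thus $\beta$ is an automorphism with $[\beta]\in\ker\Psi_q$ and $\Theta_q([\beta])=\phi$.

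The real content is the homotopy invariance, which I would isolate as a key lemma: a normalized $\beta$ satisfies $\beta\simeq\mathrm{id}$ if and only if each $\varphi_\beta(v)$ is a \emph{boundary} of $L^{q-1}(V_{<n})$. Granting this, the homomorphism property yields both injectivity (the ``if'') and representative-independence (the ``only if''), since two normalized representatives of one class differ by a normalized $\gamma=\beta'\circ\beta^{-1}\simeq\mathrm{id}$ with $\varphi_\gamma=\varphi_{\beta'}-\varphi_\beta$. The ``if'' direction is explicit and clean: if $\varphi_\beta(v_i)=\partial w_i$ with $w_i\in L^{q}(V_{<n})$, define a homotopy on the Tanré cylinder by $F=\mathrm{id}$ on $V_{<n}$, $F=0$ on $sV_{<n}$ and $V'_{<n}$, and $F(v_i)=v_i$, $F(sv_i)=w_i$, $F(v_i')=\partial w_i$. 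One checks $F$ is a DG Lie map with $F(v)=v$ and $F\circ e^\theta(v_i)=\beta(v_i)$, the crucial point being that every monomial of $\sum_{n\geq 1}\frac{1}{n!}(S\circ D)^n(v_i)$ lies in the subalgebra on $V_{<n},sV_{<n},V'_{<n}$ and carries a factor from $sV_{<n}$ (the outermost operation $S$ can only introduce $sV_{<n}$, never $sv_i$), and is therefore annihilated by $F$.

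The harder direction, and the step I expect to be the main obstacle, is the ``only if'': an \emph{arbitrary} cylinder homotopy $F$ with $F(v)=v$ and $F\circ e^\theta(v)=\beta(v)$ forces $\varphi_\beta(v)=F(v')+F\big(\sum_{n\geq 1}\frac{1}{n!}(S\circ D)^n(v)\big)$ to be a boundary. The leading term is immediately a boundary, since $F(v')=\partial(F(sv))$, but for a general $F$ one cannot assume $F$ kills $sV_{<n}$, so the higher Tanré terms must be controlled. I would handle this by \emph{straightening}: since $\beta_{<n}=\mathrm{id}$, the restriction of $F$ to the sub-cylinder $(L(V_{<n}),\partial_{<n})_I$ is a self-homotopy of $\mathrm{id}_{L(V_{<n})}$, and I would replace $F$ by a homotopy with the same endpoints on $V_{q-1}$ that restricts to the \emph{constant} homotopy on $L(V_{<n})$ (annihilating $sV_{<n}$ and $V'_{<n}$); for such a normalized $F$ the higher terms vanish exactly as in the ``if'' direction, leaving $\varphi_\beta(v)=\partial(F(sv))$. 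The delicate point is making the straightening precise—extending the constant homotopy on the subalgebra over $V_{q-1}$ while preserving both endpoints, i.e. solving a relative lifting problem against the cylinder inclusion—and I would either carry this out directly or, equivalently, show that modifying $F$ leaves the homology class $\{\varphi_\beta(v)\}$ unchanged by tracking the $(S\circ D)^n$-contributions in homology.
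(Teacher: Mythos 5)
Your proposal is correct in substance and follows the same skeleton as the paper's proof: well-definedness via homotopies that annihilate $sV_{<n}$ and $V'_{<n}$ (so that the higher Tanr\'e terms $F\big(\sum_{n\geq 1}\tfrac{1}{n!}(S\circ D)^n(v)\big)$ die and only the boundary $\partial(F(sv))$ survives), surjectivity by the same unipotent construction $\beta(v)=v+z$ with $z$ a chosen cycle representative of $\psi(v)$, and the homomorphism property treated as routine (the paper does not even write out your computation $\varphi_{\beta\circ\beta'}=\varphi_\beta+\varphi_{\beta'}$). The one genuinely different ingredient is injectivity: the paper observes that if $\Theta_q([\beta])=\Theta_q([\beta'])$ then the image of $\beta-\beta'$ lies in an acyclic sub DG Lie algebra and cites Tanr\'e's Prop.~II.5(4) to conclude $\beta\simeq\beta'$, whereas you build the homotopy by hand on the cylinder ($F=\mathrm{id}$ on $V_{<n}$, $F=0$ on $sV_{<n}$ and $V'_{<n}$, $F(sv_i)=w_i$, $F(v_i')=\partial w_i$); your observation that every monomial of the higher Tanr\'e terms stays in the sub-cylinder algebra and carries a factor from $sV_{<n}$ is exactly what makes that verification close, and your degree bookkeeping is right. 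Packaging injectivity and well-definedness as the two directions of a single lemma (normalized $\gamma\simeq\mathrm{id}$ iff $\varphi_\gamma$ is boundary-valued), reduced to one map via $\gamma=\beta'\circ\beta^{-1}$ and $\varphi_{\beta^{-1}}=-\varphi_\beta$, is tidier and more self-contained than the paper's two separate arguments; what the paper's route buys is brevity, at the cost of an external citation and a slightly awkward appeal to the non-Lie map $\beta-\beta'$.

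On the ``only if'' direction you are right that it is the delicate point, and you should know the paper does not actually prove it either: it simply asserts that, since both representatives restrict to the identity on $L(V_{<n})$, the homotopy between them ``can be chosen so that $F(V'_{<n})=F(sV_{<n})=0$'' --- precisely the straightening you flag as a relative lifting problem against the cylinder inclusion. So your sketch is at the same level of completeness as the published argument, with the merit of naming the issue explicitly rather than hiding it in the phrase ``can be chosen.'' If you want to go beyond the paper, this is the step to write out in full, e.g.\ by an inductive obstruction argument over the degree or bracket-length filtration of the sub-cylinder, or by your alternative of showing directly that the $(S\circ D)^n$-contributions of an arbitrary $F$ only change $\varphi_\beta(v)$ by boundaries; everything else in your proposal is complete and correct as it stands.
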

\begin{proof} 
First we prove that $\Theta_q$ is well-defined. Suppose
$\beta \simeq \beta'$ satisfy the conclusion of Lemma \ref{lem1}.   Since both maps then restrict to the identity on $L(V_{< n})$, the     homotopy $F \colon (L(V), \partial)_I \to (L(V), \partial)$ between them can be chosen so that $F(V'_{<n}) = F(sV_{<n}) =0$.  
Given $v\in V_{ q-1}$ suppose $\varphi_\beta(v) = \{y \}$ and $\varphi_{\beta'}(v) = \{ y' \}$ for cycles $y, y' \in L^{q-1}(V_{<n}).$   We then have
$$\begin{array} {lll}
y'-y &=& \beta'(v)-\beta(v) \\ & = & F\circ e^{\theta}(v)-F(v)\\    & = &F(v)+F(v')+F\Big(\underset{n\geq 1}{\sum} \frac{1}{n!}(S\circ\partial)^{n}(v)\Big)-F(v) \\  
&=&   F(D(sv))+F\Big(\underset{n\geq 1}{\sum} \frac{1}{n!}(S\circ\partial)^{n}(v)\Big)  \\  
& = &    \partial(F(sv))  \end{array}$$
Thus $y'-y$ is a boundary.  

 It is easy to check $\Theta_q$ is a homomorphism. For injectivity, suppose  $\Theta_q([\beta])(v) =\Theta_q([\beta'])(v)$ in $H_{n+1}(L(V_{\leq n-1}), \partial_{<n})$ for all $v \in V_{q-1}.$ Then $\im\{ \beta - \beta' \colon L(V) \to L(V)\}$ is contained in an acyclic sub DG Lie algebra of $(L(V), \partial)$. Thus   $\beta \simeq \beta'$ by \cite[Prop.II.5(4)]{Tanre}.  

Finally, given a homomorphism  $\psi \in \Hom(V_{q-1}, H_{q-1}(L( V_{<n}), \partial_{<n}))$,  we define
$\beta\colon   (L(V),\partial)\to (L(V),\partial)$ by:
$$\beta(v)=v+\psi(v) \,\text{ for } v \in V_{q-1}\,\,\,\,\,\,\,\,\,\text{ and }\,\,\,\,\,\,\,\,\, \beta =\mathrm{id} \,\text{ on } V_{< n}.$$
Then $\beta$ is a DG Lie morphism with $\Theta_q([\beta])=\psi$. \end{proof}

Summarizing,  we have proven:

\begin{theorem}\label{t1}
   Let $(L(V),\partial)$ be an object in $\textbf{DGL}_{m}^{k}(R)$  with $V = V _{q-1} \oplus V_{< n}$ for $q > n.$  Then   there exists
   a short exact sequence of groups:
$$
0 \to\Hom(V_{q-1},   H_{q-1}(L(V_{< n}))) \to
\mathcal{E}(L(V), \partial) 
\to \C^{q} \to 0.  \qed
$$

\end{theorem}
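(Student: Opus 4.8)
The plan is to read the short exact sequence directly off the homomorphism $\Psi_q$ of Proposition \ref{p3}, assembling it from the three results already established in this section. Since $\Psi_q \colon \E(L(V), \partial) \to \C^q$ is a surjective group homomorphism, exactness at $\C^q$ is immediate, and the only remaining tasks are to identify $\ker \Psi_q$ with the abelian group $\Hom(V_{q-1}, H_{q-1}(L(V_{<n})))$ and to realize the corresponding inclusion as the first map of the sequence.

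First I would recall the explicit description of $\ker \Psi_q$ obtained above, namely the classes $[\alpha]$ with $\widetilde{\alpha}_{q-1} = \mathrm{id}_{V_{q-1}}$ and $\alpha_{<n} \simeq \mathrm{id}_{L(V_{<n})}$. For each such class, Lemma \ref{lem1} supplies a normalized representative $\beta$ restricting to the identity on $L(V_{<n})$ and sending each generator of $V_{q-1}$ into the cycles of $L^{q-1}(V_{<n})$, so that the associated map $\varphi_\beta$ takes cycle values. This normalization is precisely what makes the homology class $\{\varphi_\beta(v)\}$ in $H_{q-1}(L(V_{<n}))$ available, and hence makes $\Theta_q$ defined on the kernel.

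Next I would invoke Proposition \ref{p1}, which asserts that $\Theta_q \colon \ker \Psi_q \to \Hom(V_{q-1}, H_{q-1}(L(V_{<n})))$ is an isomorphism of groups; note the target is abelian, so $\ker \Psi_q$ is a normal abelian subgroup. Composing the inverse $\Theta_q^{-1}$ with the tautological inclusion $\ker \Psi_q \hookrightarrow \E(L(V), \partial)$ produces the desired monomorphism $\Hom(V_{q-1}, H_{q-1}(L(V_{<n}))) \hookrightarrow \E(L(V), \partial)$. Exactness at the $\Hom$-term is its injectivity; exactness at the middle term holds because the image of this composite is exactly $\ker \Psi_q$ by construction; and exactness at $\C^q$ is the surjectivity of $\Psi_q$. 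The three maps therefore fit together into the asserted short exact sequence.

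The genuine technical content has already been discharged in Lemma \ref{lem1} and Proposition \ref{p1}, so the main obstacle is not the formal assembly but those two results: the well-definedness and injectivity of $\Theta_q$. Both rest on manipulating the Tanr\'e cylinder $(L(V), \partial)_I$ and its automorphism $e^\theta$ to show that two normalized representatives inducing the same class in $H_{q-1}(L(V_{<n}))$ differ by a homotopy whose difference is supported on an acyclic sub DG Lie algebra. With these in hand, the proof of the theorem is a direct concatenation of Propositions \ref{p3} and \ref{p1}, requiring no further computation.
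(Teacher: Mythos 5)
Your proposal is correct and matches the paper exactly: Theorem \ref{t1} is stated there with a \qed and the single word ``Summarizing,'' because its proof is precisely the assembly you describe---surjectivity of $\Psi_q$ from Proposition \ref{p3}, and the identification $\ker \Psi_q \cong \Hom(V_{q-1}, H_{q-1}(L(V_{<n})))$ via Lemma \ref{lem1} and Proposition \ref{p1}. You also correctly locate the technical weight in the well-definedness and injectivity of $\Theta_q$ rather than in the formal concatenation, which is exactly how the paper distributes the work.
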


The first  exact sequence in Theorem \ref{main} is a direct consequence. 
\begin{proof}[Proof of Theorem \ref{main} Part (1)]
The result  follows from  Theorem \ref{t1},  the   isomorphisms $\E(X_R) \cong \E(L(V), \partial), $   $\pi_{q}(X^{n})_R \cong  H_{q-1}(L(V_{< n}) )$
and the identification of $V_{q-1}$ with the  free $R$-module  with generators corresponding to the  $q$-cells of $X$.  See \cite[Theorem 8.5]{An1}.
\end{proof}
We now focus on the group $\E_*(L(V), \partial)$ and the proof of Theorem \ref{main} Part (2). 
Again, we take $(L(V), \partial)$ to be an object in $\textbf{DGL}_m^k(R )$ with  $V = V_{q-1} \oplus V_{< n}$ for $m \leq n < q \leq k.$  When $q =n+1$ we must take into account the 
linear differential $d_{n} \colon V_{n} \to V_{n-1}.$  Since $V_n$ is a free $R$-module
we may choose a subspace $W_n$ of $V_n$ complementary to the $d_n$-cycles in $V_n$   giving $V_n = (\ker \, d_n) \oplus W_n.$  When $q > n+1$ we set $W_{q-1} = V_{q-1}.$

Let $\alpha \in \aut_*(L(V), \partial)$ be given and, as usual, let $\widetilde{\alpha}_{q-1} \colon V_{q-1} \to V_{q-1}$ denote the induced map.  Since $\alpha$ induces  the identity on $H_*(V)$, we see $\widetilde{\alpha}_{q-1}$ fixes $\ker \, d_{q-1} = H_{q-1}(V)$. It follows that $\alpha$   induces a map $\widetilde{\alpha}_{q-1}' \colon W_{q-1} \to W_{q-1}$.  
\begin{definition}
\label{C*}
 Let  $\C_*^{q}$ denote the subset of pairs  $(\chi, [\eta]) \in \aut(W_{q-1}) \times \E_*(L(V_{< n}), \partial_{<n})$ such that   the following  diagram is commutative:
$$\xymatrix{ W_{q-1} \ar[rr]^{\chi} \ar[d]^{B_{q-1}} &&  W_{q-1} \ar[d]^{B_{q-1}} \\
H_{q-2}(L(V_{< n})) \ar[rr]^{H(\eta)} && H_{q-2}(L(V_{< n}))}
$$
\end{definition}
\begin{remark}  If $q > n+1$ or if $d_n = 0$ then $W_n = \{ 0 \}.$  In this case $$C^q_* = C^q \cap \E_*(L(V_{< n}), \partial_{<n}).$$
\end{remark}
We prove: 
\begin{theorem}
\label{t5}
Let  $(L(V),\partial)$ be an object in $\textbf{DGL}_{m}^{k}(R)$ with $V = V _{q-1} \oplus V_{< n}$ for $q > n.$    Then there is a short exact sequence:
$$
0 \to\Hom(V_{q-1},   H_{q-1}(L(V_{< n}))) \to
\mathcal{E}_*(L(V), \partial) 
\to \C_*^{q} \to 0
$$
\end{theorem}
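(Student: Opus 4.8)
The plan is to reproduce the proof of Theorem \ref{t1} verbatim inside the subgroup $\aut_*(L(V),\partial)$, replacing $\C^q$ by $\C^q_*$ and the free summand $V_{q-1}$ by $W_{q-1}$. The three ingredients will be the analogue of Proposition \ref{p3} (a surjection onto $\C^q_*$), the normal-form reduction of Lemma \ref{lem1}, and the kernel computation of Proposition \ref{p1}. First I would check that the homomorphism $\Psi_q$ restricts to a homomorphism $\Psi_q^*\colon \E_*(L(V),\partial)\to\C^q_*$, $[\alpha]\mapsto(\tilde\alpha'_{q-1},[\alpha_{<n}])$. For $\alpha\in\aut_*(L(V),\partial)$ the restriction $\alpha_{<n}$ induces the identity on $H_*(V_{<n})$ and so lies in $\aut_*(L(V_{<n}),\partial_{<n})$; and, as recorded before Definition \ref{C*}, $\tilde\alpha_{q-1}$ fixes $\ker d_{q-1}=H_{q-1}(V)$ and therefore descends to $\chi=\tilde\alpha'_{q-1}\in\aut(W_{q-1})$. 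The defining square of $\C^q_*$ is then just the restriction to $W_{q-1}$ of the square already established for $\Psi_q$, so the image pair lands in $\C^q_*$.

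The substance of the argument is surjectivity, which I would obtain by adapting Proposition \ref{p3}. Given $(\chi,[\eta])\in\C^q_*$, I would choose a representative $\eta\in\aut_*(L(V_{<n}),\partial_{<n})$ and split $V_{q-1}=\ker d_{q-1}\oplus W_{q-1}$. On a basis of $W_{q-1}$ I set $\alpha=\chi$ up to a correction term in $L(V_{<n})$, exactly as in Proposition \ref{p3}; the commuting square of $\C^q_*$ guarantees $(\eta\circ\partial-\partial\circ\chi)(w)\in\im\partial_{<n}$, so the corrections exist. On a basis of $\ker d_{q-1}$ I set $\alpha=\mathrm{id}$ up to a correction, which keeps $\tilde\alpha_{q-1}=\mathrm{id}$ there and hence forces $\alpha$ to act as the identity on $H_{q-1}(V)=\ker d_{q-1}$. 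Putting $\alpha=\eta$ on $V_{<n}$ and extending multiplicatively produces an automorphism, and it then remains to check that it induces the identity on all of $H_*(V)$, so that $[\alpha]\in\E_*(L(V),\partial)$ with $\Psi_q^*([\alpha])=(\chi,[\eta])$.

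The kernel then costs nothing beyond Theorem \ref{t1}. An element of $\ker\Psi_q^*$ is an $\alpha\in\aut_*$ with $\tilde\alpha'_{q-1}=\mathrm{id}$ on $W_{q-1}$ and $\alpha_{<n}\simeq\mathrm{id}$; since $\alpha$ already fixes $\ker d_{q-1}$, this gives $\tilde\alpha_{q-1}=\mathrm{id}_{V_{q-1}}$, so $\ker\Psi_q^*=\ker\Psi_q$. Conversely, every representative $\beta(v)=v+\psi(v)$ of $\ker\Psi_q$ built in Proposition \ref{p1} already lies in $\aut_*$, because $\psi(v)\in L(V_{<n})$ has no linear part in degree $q-1$, so $\beta$ is the identity on the indecomposables and hence on $H_*(V)$. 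Thus $\Theta_q$ of Proposition \ref{p1} restricts to an isomorphism $\ker\Psi_q^*\cong\Hom(V_{q-1},H_{q-1}(L(V_{<n})))$, and assembling this with the surjection $\Psi_q^*$ yields the stated short exact sequence.

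The hard part will be the two starred assertions: that the lift produced in the surjectivity step genuinely lies in $\aut_*$, and equally that $\Psi_q^*$ really lands in $\C^q_*$ rather than in a larger group. Both hinge on the interaction of $\alpha$ with $H_{q-1}(V)$ in the case $q=n+1$, where the top cells contribute boundaries $d_n(W_n)\subseteq V_{n-1}$ and so $H_{n-1}(V)$ is a proper quotient of $H_{n-1}(V_{<n})$. The decomposition $V_{q-1}=\ker d_{q-1}\oplus W_{q-1}$ together with the $B_{q-1}$-square of Definition \ref{C*} is exactly the bookkeeping that forces the induced action on $H_*(V)$ to be trivial and makes the correction equations on the $\ker d_{q-1}$ summand solvable; I expect this compatibility to be the step requiring the most care.
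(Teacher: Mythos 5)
Your proposal follows the paper's proof essentially step for step: the paper defines the same restriction $\Gamma_q([\alpha]) = (\widetilde{\alpha}'_{q-1}, [\alpha_{<n}])$, proves surjectivity by extending $\chi$ to $\xi \in \aut(V_{q-1})$ via the identity on $\ker d_{q-1}$ and invoking Proposition \ref{p3} (noting the resulting lift may be chosen in $\aut_*(L(V),\partial)$ since $\xi$ and $\eta$ fix $H_*(V)$), and finishes exactly as you do, by observing $\ker \Gamma_q = \ker \Psi_q$ and applying Proposition \ref{p1}. The two ``starred assertions'' you flag at the end are precisely the points the paper dispatches in a single line each, so your write-up is, if anything, more explicit about the $q = n+1$ bookkeeping than the published argument.
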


\begin{proof}
Define $\Gamma_q \colon \mathcal{E}_*(L(V), \partial) \to \C^q_*$ by $\Gamma_q(\alpha) = (\widetilde{\alpha}_{q-1}', [\alpha_{<n}]).$  We claim $\Gamma_q$ is surjective.  For given $(\chi, [\eta]) \in C_*^q$ as in Definition \ref{C*}, we can extend $\chi$ to a map $\xi \colon V_{q-1} \to V_{q-1}$ by setting $\xi = \mathrm{id}$ on $\ker \, d_{q-1}.$  Then the pair $(\xi, [\eta]) \in C^q$ and so, by Proposition \ref{p3}, there exists $[\alpha] \in \E(L(V), \partial)$ with $\Psi([\alpha]) = (\xi, [\eta])$ and, further, $\alpha$ may be chosen in $\aut_*(L(V), \partial)$ since $\xi$ and $\eta$ fix $H_*(V).$ Thus $\Gamma_q([\alpha]) = (\chi, [\eta]).$ Finally, observe that $\ker \, \Gamma_q = \ker \, \Psi_q$ and the result follows from Proposition \ref{p1}.
\end{proof}
We deduce: 
\begin{proof}[Proof of Theorem \ref{main} Part (2)] The result follows again from  the Quillen-Anick identifications as in the proof of Theorem \ref{main}, above.  In this case, we note that the  linear differential $d_{n} \colon V_n \to V_{n-1}$ corresponds   linking homomorphism $\pi_{n+1}(X^{n+1}, X^{n})_R\to \pi_{n}(X^{n}, X^{n-1})_R$  in the long exact sequence of the triple.  
\end{proof}
\begin{remark} The exact sequences in Theorems \ref{t1} and \ref{t5} do not split in general.  One simple criterion for splitting occurs when $\partial_{< n}  =0 $. 
\end{remark}

\section{Self-Equivalences of $R$-Local Spaces} \label{sec:R-local}
We begin with a result on the full group $\E(X_R)$.  The following   was stated in the introduction as  Example \ref{exmain}.
\begin{theorem}
\label{thm:exmain} Let $G_1, \ldots, G_r$  be finitely generated abelian groups.  Given $R \subseteq \Q,$
  suppose $X = M(G_1,m+1) \vee M(G_2,m+3) \vee \cdots \vee M(G_r, m +2r-1)$  for $r \leq  m/2$ and $X$ an object of  $\textbf{CW}_{m}^{k+1}$ satisfying Hypothesis \ref{eq:hyp}.  Then
$$
    \E(X_R)\cong \prod_{i=1}^{r}\aut((G_i)_R).
$$
\end{theorem}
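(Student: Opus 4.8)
The plan is to read off the Anick--Quillen model of $X$ and to exploit the sparseness of the cell dimensions, which should force every self-map and every homotopy to be \emph{linear}. Write $(L(V),\partial)$ for the model of $X_R$, so that $\E(X_R)\cong\E(L(V),\partial)$. Since $X$ is a wedge of Moore spaces placed in the dimensions $m+2i-1$ for $1\le i\le r$, the isomorphism $\widetilde H_*(X;R)\cong H_{*-1}(V)$ shows that $V$ splits as $V=\bigoplus_{i=1}^r V^{(i)}$, where each $V^{(i)}$ is concentrated in the two consecutive degrees $m+2i-2$ and $m+2i-1$ and, with the linear part $d$ of $\partial$, forms a free resolution
$$0\to V^{(i)}_{m+2i-1}\xrightarrow{\ d\ }V^{(i)}_{m+2i-2}\to (G_i)_R\to 0$$
of $(G_i)_R$ over the principal ideal domain $R$. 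The constraint $r\le m/2$ guarantees that every generator sits in degree at most $m+2r-1\le 2m-1$, whereas every nonzero bracket in $L(V)$ has degree at least $2m$; moreover the degree ranges of distinct blocks $V^{(i)}$ are disjoint.

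First I would record the two linearization statements coming from this degree gap. Because $\alpha(v)$ has degree $<2m$ for each generator $v$, no bracket can occur in $\alpha(v)$, so every DG Lie endomorphism of $(L(V),\partial)$ carries $V$ into $V$; hence $\partial|_V=d$ and $\aut(L(V),\partial)$ is identified with the group of chain automorphisms of $(V,d)$. As distinct blocks occupy disjoint degrees, a chain automorphism preserves each $V^{(i)}$, giving $\aut(L(V),\partial)\cong\prod_{i=1}^r\aut(V^{(i)},d)$. The same degree bound should linearize the homotopy relation: given a Tanr\'e homotopy $F\colon (L(V),\partial)_I\to(L(V),\partial)$ between two such linear automorphisms $\alpha,\alpha'$, the assignment $h(v)=$ (linear part of $F(sv)$) defines a chain homotopy with $\alpha'-\alpha=dh+hd$, and conversely any chain homotopy of $(V,d)$ extends over the cylinder. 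Thus DG Lie homotopy of chain automorphisms coincides with chain homotopy, and $\E(L(V),\partial)\cong\prod_i\aut(V^{(i)},d)/{\simeq_{\mathrm{ch}}}$.

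With these reductions the computation becomes pure homological algebra. I would study the homology representation $\E(L(V),\partial)\to\aut(H_*(V))$, whose kernel is by definition $\E_*(L(V),\partial)$ and whose target is $\aut\!\big(\bigoplus_i(G_i)_R\big)=\prod_i\aut((G_i)_R)$, the splitting holding because the summands lie in distinct degrees. Surjectivity is the comparison theorem: any $\phi_i\in\aut((G_i)_R)$ lifts to a chain automorphism of the free resolution $V^{(i)}$, and the product of these lifts realizes $\prod_i\phi_i$. For injectivity I would show $\E_*(L(V),\partial)=0$: a chain automorphism inducing the identity on $H_*(V)$ lifts the identity of each $(G_i)_R$, hence by uniqueness of lifts up to chain homotopy it is chain homotopic to the identity, and therefore, by the homotopy linearization above, trivial in $\E(L(V),\partial)$. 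Combining these gives $\E(X_R)\cong\prod_{i=1}^r\aut((G_i)_R)$.

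The crux, and the step I expect to absorb most of the work, is precisely this homotopy linearization: proving that two chain automorphisms are homotopic as DG Lie maps exactly when they are chain homotopic. This is where the hypothesis $r\le m/2$ is indispensable. Without the degree gap the group $\prod_i\aut(V^{(i)},d)$ is far larger than $\prod_i\aut((G_i)_R)$ -- it contains a copy of $R^*$ for each cyclic summand of a $G_i$ -- and it is only the collapse induced by the homotopy relation that cuts it down to the automorphisms of homology. I would carry out the cylinder bookkeeping by tracking the action of $S$ and $e^\theta$ on $V$, $sV$ and $V'$ in the relevant degrees, noting that $F(sv)$ lands in degree $\le 2m$ so that any decomposable part of $F(sv)$ cannot contribute to the linear difference $\alpha'-\alpha$, and hence that only the linear part of $F(sv)$ is recorded by $h$.
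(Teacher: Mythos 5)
Your proposal is correct, and it reaches the theorem by a genuinely different route than the paper. The paper proceeds by induction over the skeleta, applying its general short exact sequence (Theorem \ref{t1}) at each stage: the hypothesis $r\leq m/2$ forces $H_{q-1}(L(V_{<n}))=0$ in all relevant degrees, so the kernel $\Hom(V_{q-1},H_{q-1}(L(V_{<n})))$ vanishes and $\E$ of each skeleton is identified with the group $\C^{q}$ of compatible pairs, which is then computed degreewise. You instead argue globally: the same degree gap linearizes both the automorphisms (so $\aut(L(V),\partial)\cong\prod_i\aut(V^{(i)},d)$) and, via your cylinder bookkeeping ($e^{\theta}(v)=v+v'+s(dv)$ in this range, $h(v)=$ linear part of $F(sv)$, with the converse extension of a chain homotopy over the cylinder), the homotopy relation itself; the comparison theorem for the free resolutions $V^{(i)}$ of $(G_i)_R$ then gives both surjectivity onto $\prod_i\aut((G_i)_R)$ and vanishing of the kernel. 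I verified your key identity $\alpha'-\alpha=dh+hd$: since $(S\circ D)^2(v)=0$ here, the computation closes exactly as you describe. What your route buys is worth noting: the collapse you isolate is precisely what is needed to justify the paper's step ``$\C^{m+2}\cong\aut((G_1)_R)$.'' As literally defined, $\C^{m+2}$ consists of pairs of module automorphisms commuting with $d_{m+1}$ (the homotopy relation on $\E(L(V_m),0)$ is trivial), and when $(G_1)_R$ has torsion, say $(G_1)_R\cong R/p^2$, this group of pairs is $R^*$, which surjects onto $\aut((G_1)_R)\cong (R/p^2)^*$ with nontrivial kernel $1+p^2R$; it is only after quotienting by chain homotopies --- equivalently by DG Lie homotopies whose cylinder sends $sV_m$ into $V_{m+1}$, exactly the maps your lemma tracks --- that one obtains $\aut((G_1)_R)$. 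So your proof is more self-contained and makes explicit a collapse the paper's argument leaves implicit, at the cost of not reusing the exact-sequence machinery that the paper exploits in its other applications.
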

\begin{proof}
Recall $M(G,i)$ is $(i-1)$-connected and of dimension $ \leq i+1$.    It follows that, in  the Anick model  $(L(V), \partial)$ for $X$, we have     $V=V_{m}\oplus V_{m+1} \oplus \cdots \oplus V_{m+2r-1}$ with  $\partial = d$   purely linear and taking the form   $$\xymatrix{  V_{m+2r-1}\ar[rr]^{d_{m+2r-1}} && V_{m+2r-2}\ar[r]^{0} & V_{m+ 2r -3}\ar[rr]^{d_{m+2r-3}}&&V_{m+2r-4 }\ar[r]^{0} & \cdots \\
\cdots \ar[r]^0 & V_{m+3} \ar[r]^{d_{m+3}} & V_{m+2} \ar[r]^0 & V_{m+1} \ar[r]^{d_m} & V_m \ar[r] & 0.}$$
Here  $$(G_i)_R \cong \frac{V_{m+2i-2}}{\im \, d_{m+2i-1}}.$$

Note $\C^{m+2}$   consists  of pairs $(\xi_{m+1},\lambda_{m}) \in \aut(V_{m+1})\times \aut(V_{m})$ with $d_{m+1} \circ \xi_{m+1} = \lambda_m \circ d_{m+1}$  
Since $(G_1)_R \cong V_{m}/  \mathrm{Im}\,d_{m+1}$ we deduce that  $\C^{m+2} \cong \aut(G_R)$. Since  $H_{m+1}(L(V_{\leq m}))=0$, invoking Theorem \ref{t1} we deduce $ \E(L(V_{\leq m+1})) \cong \aut(G_R).$

Now proceed by induction. Assume $r > 1$ and $r \leq m/2,$ as hypothesized.  The latter assumption ensures $ H_{m+2r-1}(L(V_{\leq m+2r-2}))=  H_{m+2r-2}(L(V_{\leq m+2r-3}))=0.
 $
Since $d_{m+2r-2} = 0$ we have  $$\C^{m+2r-1} = \aut(V_{m+2r-2})\times \E(L(V_{\leq m+2r -3}, \partial_{\leq m+2r -3})).$$   Since $H_{m+2r-2}(L(V_{\leq m+2r-3})) =0$, by Theorem \ref{t1} and the induction hypothesis  we obtain $$\E(L(V_{\leq m+2r-2}), \partial_{\leq m+2r -2}) \cong \aut(V_{m+2r-2})\times \prod_{i=1}^{r-1} \aut((G_i)_R),$$

 Finally, note  $\C^{m+2r}$ is the set  of triples  $$(\xi_{m+2r-1}, \lambda_{m+2r-2}, \alpha) \in  \aut(V_{m+2r-1})\times \aut(V_{m+2r-2})\times \prod_{i=1}^{r-1} \aut((G_i)_R)$$ such that the pair $(\xi_{m+2r-1}, \lambda_{m+2r-2}) \in \aut(V_{m+2r-1})\times \aut(V_{m+2r-2})$ gives a commutative diagram:
 $$
\xymatrix{  V_{m+2r-1} \ar[d]^{d_{m+2r-1}}  \ar[rr]^{\xi_{m+2r-1}} && V_{m+2r-1} \ar[d]^{d_{m+2r-1}}   \\
V_{m+2r-2} \ar[rr]^{\lambda_{m+2r-2}} && V_{m+2r-2}   
}
$$
 We conclude $\C^{m+2r} \cong  \aut((G_r)_R) \times \prod_{i=1}^{r-1} \aut((G_i)_R)$.   Theorem \ref{t1} and the fact that $H_{m+2r-1}(L(V_{\leq m+2r-2})) =0$   now completes the induction and the proof. \end{proof}

 We deduce the following  direct consequence of Theorem  \ref{main} (2).
\begin{corollary} \label{main3}  Let $R \subseteq \Q$ and 
 $X = X^q = X^n \cup_\alpha \left( \bigcup_{i=1}^j e_i^q \right)$ satisfy Hypothesis \ref{eq:hyp}.   When $q= n+1$ suppose further that the linking homomorphism  $\pi_{n+1}(X^{n+1}, X^{n})_R\to \pi_{n}(X^{n}, X^{n-1})_R$ vanishes. 
Then  $\mathcal{E}_{*}(X^{n})_R = 0$ implies $\mathcal{E}_{*}(X^{q})_R \cong \bigoplus_{i=1}^{j} \pi_{q}(X^{n})_R. \qed$ 
\end{corollary}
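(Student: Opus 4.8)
The plan is to obtain the isomorphism by showing that the right-hand term $\C^q_*$ in the short exact sequence of Theorem \ref{main} Part (2) reduces to the trivial group under the stated hypotheses. Once $\C^q_* = 0$, the sequence
$$0 \to \bigoplus_{i=1}^{j}\pi_{q}(X^{n})_R \to \E_*(X^q_R) \to \C^q_* \to 0$$
collapses to $\E_*(X^q_R) \cong \bigoplus_{i=1}^{j}\pi_{q}(X^{n})_R$, and applying Maruyama's identification $\E_*(X^q)_R \cong \E_*(X^q_R)$ on the left yields the asserted statement.

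Thus everything reduces to verifying $\C^q_* = 0$, and the key step is the containment $\C^q_* \subseteq \E_*(X^n_R)$. I would split into the two cases recorded in Theorem \ref{main}. When $q > n+1$, this containment is already asserted there, since the gluing degree forces the relevant linear differential to vanish and the $\aut(W_{q-1})$-component to be trivial. When $q = n+1$, the extra hypothesis is precisely what makes the rank $k$ of the linking homomorphism $\pi_{n+1}(X^{n+1},X^n)_R \to \pi_n(X^n, X^{n-1})_R$ equal to zero; then the factor $\mathrm{GL}_k(R) = \mathrm{GL}_0(R)$ is trivial, and $\C^q_* \subseteq \mathrm{GL}_0(R) \times \E_*(X^n_R) \cong \E_*(X^n_R)$ as before. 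In the language of the DG Lie model this is just the statement that the vanishing linking homomorphism corresponds to $d_n = 0$, whence $W_{q-1} = 0$ by the remark following Definition \ref{C*}, so the $\aut(W_{q-1})$-component again disappears.

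With the containment established, the hypothesis $\E_*(X^n)_R = 0$, equivalently $\E_*(X^n_R) = 0$ by Maruyama, forces $\C^q_* = 0$, which completes the argument. I do not expect a genuine obstacle: the result is a formal corollary of Theorem \ref{main} Part (2). The only point requiring care is the bookkeeping at $q = n+1$, where one must confirm that the vanishing of the linking homomorphism is exactly the condition eliminating the potentially nontrivial $\mathrm{GL}_k(R)$ factor; for $q > n+1$ this factor is absent automatically and no extra assumption is needed.
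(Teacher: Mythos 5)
Your proposal is correct and coincides with the paper's (implicit) argument: the corollary carries a \(\qed\) precisely because it is the direct consequence of Theorem \ref{main}(2) (equivalently Theorem \ref{t5}) that you spell out, with \(W_{q-1}=0\) holding automatically for \(q>n+1\) and being forced by the vanishing linking homomorphism (i.e.\ \(d_n=0\), via the remark after Definition \ref{C*}) when \(q=n+1\), so that \(\C^q_*\subseteq \E_*(X^n_R)=0\) and the sequence collapses. Your bookkeeping with Maruyama's isomorphism \(\E_*(X)_R\cong\E_*(X_R)\) on both ends is exactly the translation the paper relies on.
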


We apply this result to give a calculation of  $\E_*(X)_R$.   
Given $R$-modules $H_{i}, H_{j}$, let   $[H_i, H_j] \subseteq(H_{i}\otimes H_{j})\oplus (H_{j}\otimes H_{i})$ denote the $R$-submodule given by:
 $$[H_{i},H_{j}] = R\langle x_{i}\otimes x_{j}-(-1)^{(i-1)(j-1)}x_{j}\otimes x_{i} \mid  x_{i}\in H_{i},  x_{j}\in H_{j} \rangle$$ 
 Here we write $R \langle \, \, \rangle$ to denote a free $R$-module on   given generators.   
 Given subspaces $V, W \subseteq L$ with $L$ a graded Lie algebra we similarly write $$[V, W] = R\langle [v, w] \mid v \in V, w \in W \rangle.$$ 
We prove
\begin{theorem}\label{c4}
   Let $R \subseteq \Q$ and $X$ in $\textbf{CW}_{m}^{k+1}$ satisfy Hypothesis \ref{eq:hyp}. Suppose $X$ has a cellular decomposition of the form $$X^{m+1}\subset X^{m+2}\subset\cdots\subset X^{2m}=X$$ such that  that the linking homomorphisms $\pi_{r+1}(X^{r+1}, X^{r})_R\to \pi_{r}(X^{r}, X^{r-1})_R$ vanish for $r = m+2, \ldots, 2m-1$.  Suppose  $\mathcal{E}_{*}(X^{m+1})_R = 0$.   Then 
$$\mathcal{E}_{*}(X^{i})_R = 0, \hbox{\, for \,} i\leq 2m-1 \hbox{\, and \, } \mathcal{E}_{*}(X^{2m})_R  \cong \bigoplus_{i=1}^{j} [H_{m+1}(X; R), H_{m+1}(X;R)], $$
where  $j = \dim_R(H_{2m}(X, X_{2m-1}; R))$. 
\end{theorem}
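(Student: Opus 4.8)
The plan is to run an induction up the given filtration, feeding each one-step attachment $X^{i}\subset X^{i+1}$ into Theorem \ref{t5} (or its packaged form, Corollary \ref{main3}) and tracking $\E_*$ of the Lie model. Let $(L(V),\partial)$ be the Anick model of $X$ and write $V=\bigoplus_{d}V_d$, with $V_d$ the free $R$-module on the $(d+1)$-cells, so that $(L(V_{\leq i}),\partial)$ models $X^{i+1}$. Under the Quillen--Anick dictionary the hypothesis $\E_*(X^{m+1})_R=0$ becomes $\E_*(L(V_m),0)=0$; the vanishing of the linking homomorphisms becomes the vanishing of the linear differentials $d_r\colon V_r\to V_{r-1}$ in the stated range (leaving only $d_{m+1}$ unconstrained); and $H_{m+1}(X;R)\cong V_m/\im d_{m+1}$. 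Metastability (Hypothesis \ref{eq:hyp}) guarantees that in $L(V)$ every bracket has degree $\geq 2m$ and every triple bracket degree $\geq 3m$; this is exactly what forces the bookkeeping below to collapse.

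Each attachment creating $V_i$ gives, by Theorem \ref{t5}, a short exact sequence
$$0\to\Hom\big(V_i,\,H_i(L(V_{<i}))\big)\to\E_*(L(V_{\leq i}),\partial)\to\C_*^{i+1}\to 0.$$
I would first argue $\C_*^{i+1}=0$ at every stage. Inductively $\E_*(L(V_{<i}))=0$, so $\C_*^{i+1}\subseteq\aut(W_i)$, and it suffices to show $B_i\colon W_i\to H_{i-1}(L(V_{<i}))$ is injective: then commutativity of the square in Definition \ref{C*} with $\eta=\mathrm{id}$ forces $\chi=\mathrm{id}$. Now $B_i(w)=\{d_i(w)\}$, since any quadratic part of $\partial w$ would sit in degree $i-1$, where $L(V_{<i})$ has no brackets; and the degree-$i$ part of $L(V_{<i})$ consists only of cycles (it is either empty or equal to $[V_m,V_m]$, on which $\partial$ vanishes), so there are no boundaries in degree $i-1$ and $d_i(w)$ represents $0$ only when it vanishes. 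Thus $B_i$ agrees with the injective map $d_i|_{W_i}$. When the relevant linking homomorphism vanishes one simply has $W_i=0$; the single unconstrained case $d_{m+1}$ is covered by the same injectivity. Hence $\C_*^{i+1}=0$ throughout.

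For the kernel, $H_i(L(V_{<i}))=0$ whenever $i<2m$: the generators lie in degrees $\leq i-1$ and every bracket has degree $\geq 2m>i$, so the degree-$i$ part is empty. Combined with $\C_*^{i+1}=0$ this gives $\E_*(X^{i+1})_R=0$ across the intermediate range. At the top attachment the kernel comes alive, while $\C_*=0$ persists, leaving $\E_*(X)_R\cong\Hom\!\big(V_{\mathrm{top}},\,H_{2m}(L(V_{<\mathrm{top}}))\big)$ with $\dim_R V_{\mathrm{top}}=j$. The crux is the degree-$2m$ free Lie homology: the only chains in that degree are the brackets $[V_m,V_m]$ of two bottom generators, all of which are cycles since $\partial|_{V_m}=0$, while the boundaries are exactly $\partial[V_m,V_{m+1}]=\pm[V_m,d_{m+1}V_{m+1}]=[V_m,\im d_{m+1}]$. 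Using $H_{m+1}(X;R)\cong V_m/\im d_{m+1}$ this yields
$$H_{2m}(L(V_{<\mathrm{top}}))\cong [V_m,V_m]\big/[V_m,\im d_{m+1}]\cong [H_{m+1}(X;R),\,H_{m+1}(X;R)],$$
and substituting gives the asserted direct sum of $j$ copies.

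I expect the free Lie algebra computation in the last step to be the main obstacle. One must verify that the degree-$2m$ part of $L(V_{<\mathrm{top}})$ genuinely collapses to the symmetrized square $[V_m,V_m]$ --- this is precisely where the connectivity and metastability hypotheses are used, to exclude interfering generators and triple brackets --- that the boundaries are no larger than $[V_m,\im d_{m+1}]$, and, most delicately, that the graded antisymmetry $[x,y]=-(-1)^{m^2}[y,x]$ on degree-$m$ elements reproduces exactly the sign $(-1)^{(m)(m)}$ built into the definition of $[H_{m+1},H_{m+1}]$ (giving $\Lambda^2$ for $m$ even and the symmetric square for $m$ odd). The propagation of $\E_*=0$ along the intermediate skeleta is then routine.
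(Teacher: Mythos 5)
Your proposal follows the paper's own proof almost step for step: induction up the skeleta via Theorem \ref{t5} (packaged as Corollary \ref{main3}), vanishing of the intermediate homology $H_i(L(V_{<i}))$ for $i<2m$ by the degree argument (generators in degrees $\geq m$, brackets in degrees $\geq 2m$), and a free Lie algebra computation at the top attachment. In two places you are in fact more careful than the paper. First, the paper invokes Corollary \ref{main3} for the bottom attachment $X^{m+1}\subset X^{m+2}$, where the linking homomorphism $d_{m+1}$ is \emph{not} assumed to vanish (the hypothesis covers only $r=m+2,\ldots,2m-1$), so the corollary's hypothesis fails at that step; your injectivity argument --- $B_{m+1}$ restricted to $W_{m+1}$ agrees with $d_{m+1}|_{W_{m+1}}$, injective by the choice of $W_{m+1}$, with no boundaries in degree $m$ --- supplies exactly the missing justification that $\C^{m+2}_*$ is trivial. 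Second, the paper's last line simply writes $\pi_{2m+1}(X)_R\cong[V_m,V_m]\cong[H_{m+1}(X;R),H_{m+1}(X;R)]$, silently discarding boundaries; your identification of the boundaries as $[V_m,\im\, d_{m+1}]$ and of the quotient $[V_m,V_m]/[V_m,\im\, d_{m+1}]$ with $[H_{m+1},H_{m+1}]$ is what the argument actually requires, although you flag this verification (including the sign bookkeeping) as an obstacle rather than carrying it out.

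One genuine wrinkle: your top step is internally inconsistent with the statement's indexing, in exactly the way the paper's is. With $X=X^{2m}$ the top generators are $V_{2m-1}$, in degree $2m-1$, so Theorem \ref{t5} gives kernel $\Hom\big(V_{2m-1},H_{2m-1}(L(V_{\leq 2m-2}))\big)$, and your own degree lemma ($H_i(L(V_{<i}))=0$ for $i<2m$) forces this to vanish; the kernel ``comes alive'' in degree $2m$ only if the top cells have dimension $2m+1$. You silently substitute $H_{2m}(L(V_{<\mathrm{top}}))$ for the kernel, just as the paper substitutes $\pi_{2m+1}(X)_R$ where Corollary \ref{main3} delivers $\pi_{2m}(X^{2m-1})_R$. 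So the off-by-one sits in the theorem's statement itself (consistency requires the filtration to end at $X^{2m+1}$, with $j=\dim_R H_{2m+1}(X,X^{2m};R)$, as the sphere-product computation of Example \ref{exmain3}, where the nontrivial contribution $H_{2m}(L(u))$ pairs with a cell of dimension $2m+1$, confirms); your proof reproduces the paper's intended argument for that corrected statement, but as literally written your intermediate vanishing lemma would instead yield $\E_*(X^{2m})_R=0$ under the printed indexing, and a careful writeup should resolve this discrepancy rather than inherit it.
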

\begin{proof}
By the freeness of the Anick model as DG Lie algebra over $R$ we obtain$$
H_{m+1}(L(V_{\leq m})) = H_{m+2}(L(V_{\leq m+1})) = \cdots = H_{2m-1}(L(V_{\leq 2m-2}))=0.$$   Since, by hypothesis,  $\mathcal{E}_{*}(X^{m+1})_R = 0$, applying Corollary \ref{main3} repeatedly gives
$$\mathcal{E}_{*}(X^{m+2})_R  = \mathcal{E}_{*}(X^{m+3})_R = \cdots = \mathcal{E}_{*}(X^{2m-1})_R = 0.$$
Applying this result again then gives
$$\mathcal{E}_{*}(X^{2m})_R \cong  \bigoplus_{i=1}^{j}   \pi_{2m+1}(X)_R.$$
Using the Anick model, we compute  $$ \pi_{2m+1}(X)_R \cong [V_m, V_m]
\cong [H_{m+1}(X; R), H_{m+1}(X;R)].$$
\end{proof}
The following result was stated as  Example \ref{exmain2}  in the introduction.
\begin{corollary} Let $G$ be a finitely generated abelian group and $R \subseteq \Q$ with least invertible prime $p$. 
Let $X$ be in $\textbf{CW}_{m}^{k+1}$ as in Hypothesis \ref{eq:hyp} with cellular decomposition of the form
$$M(G,m+1)=X^{m+2}\subset X^{m+3}\subset\cdots\subset X^{2m}=X.$$
Assume that the linking homomorphisms $\pi_{r+1}(X^{r+1}, X^{r})_R\to \pi_{r}(X^{r}, X^{r-1})_R$ vanish for $r = m+2, \ldots, 2m-1$ and that 
$\dim_R(H_{2m}(X, X^{2m-1};  R)) = j$. 
  Then
$$ \E_*(X)_R \cong \bigoplus_{i=1}^{j} [G_R, G_R]$$
where $G_R = H_{m+1}(M(G, m+1); R)$ is of degree $m+1$.  \end{corollary}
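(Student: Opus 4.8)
The plan is to deduce the statement directly from Theorem~\ref{c4}. The given filtration begins at $X^{m+2} = M(G,m+1)$, so I would first extend it one step downward, setting $X^{m+1}$ equal to the $(m+1)$-skeleton of $M(G,m+1)$. Since the Moore space $M(G,m+1)$ has cells only in dimensions $m+1$ and $m+2$, this skeleton is a wedge of $(m+1)$-spheres, and we obtain a cellular filtration $X^{m+1} \subset X^{m+2} \subset \cdots \subset X^{2m} = X$ of precisely the shape demanded by Theorem~\ref{c4}. The linking homomorphisms for $r = m+2, \ldots, 2m-1$ are assumed to vanish, so to apply Theorem~\ref{c4} the only hypothesis left to verify is $\E_*(X^{m+1})_R = 0$; this is where I expect the real work to lie.

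To establish that vanishing I would pass to the Anick model. As $X^{m+1}$ is a wedge of $(m+1)$-spheres, its model is $(L(V_m), 0)$, with $V_m$ a finitely generated free $R$-module concentrated in degree $m$ and with zero differential. Any $\alpha \in \aut_*(L(V_m), 0)$ induces the identity on $H_*(V) = V_m$, hence on the indecomposables of $L(V_m)$. But the lowest-degree brackets in a free graded Lie algebra on generators of degree $m$ occur in degree $2m$, so there are no decomposables in degree $m$ and $V_m$ is exactly the degree-$m$ part of $L(V_m)$. Therefore $\alpha$ fixes every generator and equals $\mathrm{id}_{L(V_m)}$, giving $\aut_*(L(V_m),0) = \{\mathrm{id}\}$ and hence $\E_*(X^{m+1})_R \cong \E_*(L(V_m),0) = 0$.

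With both hypotheses of Theorem~\ref{c4} verified, it remains only to identify the homology in its conclusion. The cells attached in passing from $X^{m+2} = M(G,m+1)$ up to $X$ all have dimension $\geq m+3$, so they do not affect $H_{m+1}$, and $H_{m+1}(X;R) \cong H_{m+1}(M(G,m+1);R) = G_R$, concentrated in degree $m+1$. Theorem~\ref{c4} then gives $\E_*(X)_R \cong \bigoplus_{i=1}^{j} [G_R, G_R]$ with $j = \dim_R H_{2m}(X, X^{2m-1};R)$. Finally I would specialize the sign in the definition of $[H_i,H_j]$ to the case $i = j = m+1$, where $(i-1)(j-1) = m^2 \equiv m \pmod 2$, so that $[G_R,G_R]$ is the $R$-submodule generated by the elements $x \otimes y - (-1)^m y \otimes x$, completing the identification.
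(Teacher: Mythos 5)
Your proposal is correct, but it establishes the crucial vanishing hypothesis of Theorem \ref{c4} by a genuinely different route than the paper. The paper does not extend the filtration downward: it takes $M(G,m+1)$ itself as the effective base and quotes Arkowitz--Maruyama \cite[Theorem 3.2]{A-M}, which identifies $\E_*(M(G,m+1))$ as a direct sum of copies of $\Z_2$; since Hypothesis \ref{eq:hyp} forces $p>2$, the prime $2$ is invertible in $R$, so $\E_*(M(G,m+1))_R=0$, and Theorem \ref{c4} then finishes the argument. You instead adjoin the $(m+1)$-skeleton $X^{m+1}$ (a wedge of $(m+1)$-spheres, as every object of $\mathbf{CW}_m^{k+1}$ has $m$-skeleton a point) and prove $\aut_*(L(V_m),0)=\{\mathrm{id}\}$ directly in the Anick model, which is valid: with $d=0$ one has $H_*(V)=V_m$, and since decomposables of $L(V_m)$ begin in degree $2m$, an automorphism inducing the identity on indecomposables fixes every generator. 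Your route buys self-containedness --- it eliminates the external topological citation and the implicit use of Maruyama's comparison $\E_*(X)_R\cong\E_*(X_R)$ needed to localize the Arkowitz--Maruyama computation --- and it matches the stated hypotheses of Theorem \ref{c4} verbatim, whereas the paper implicitly re-bases the induction of Theorem \ref{c4} one level up, at $X^{m+2}$. One caveat deserves a sentence in your write-up: with your extended filtration, the linking homomorphism at the bottom step $r=m+1$ is $d_{m+1}\colon V_{m+1}\to V_m$, the relations map of $G$, which is \emph{nonzero} whenever $G$ has torsion; Theorem \ref{c4} as stated does not hypothesize vanishing there, so your appeal is formally legitimate, but the paper's own proof of that theorem invokes Corollary \ref{main3} at this step, which in the $q=n+1$ case demands a vanishing linking map. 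The step does survive without that vanishing --- in the sequence of Theorem \ref{t5} for $q=m+2$ the kernel $\Hom(V_{m+1},H_{m+1}(L(V_m)))$ is zero for degree reasons, and $\C^{m+2}_*$ is trivial because $B_{m+1}$ is injective on $W_{m+1}$ while $\E_*(L(V_m),0)$ is trivial by your own argument --- so spelling this out would make your proof airtight and would show, incidentally, that the Arkowitz--Maruyama input can be dispensed with entirely.
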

\begin{proof} By \cite[Theorem 3.2]{A-M},  $\E_{*}(M(G,m+1)) \cong \Z_{2}\oplus\Z_{2}\oplus \cdots \oplus \Z_{2}$. By Hypothesis \ref{eq:hyp},  the prime $p =2$ is invertible in $R$.  Thus $\E_{*}(M(G,m+1))_{R} =0$. The result now follows from  Theorem \ref{c4}.
\end{proof}
We can easily  compute both groups for a tame product of spheres. The following was stated as Example \ref{exmain3} in the introduction. 
\begin{theorem} Let $R \subseteq \Q$ be a ring with least invertible prime $p$  Let $m \leq n$ be chosen so that $m, k = m+n+1$ and $ p$ satisfy Hypothesis \ref{eq:hyp}.   Then
$$\begin{array}{ll} \E \left( (S^{m+1} \times S^{n+1})_R \right) & \cong \left\{  \begin{array}{ll}  R^*  \times  R^*  & \hbox{for }  n \neq 2m \hbox{ or } m \hbox{ even.} \\
 R \oplus (R^*  \times  R^*) &  \hbox{for } n = 2m \hbox{ and } m \hbox{ odd }
\end{array} \right.
\\ \\
 \E_* (S^{m+1} \times S^{n+1})_R  & \cong \left\{  \begin{array}{ll}   0 & \hbox{for }  n \neq 2m \hbox{ or } m \hbox{ even.}  \\
R  & \hbox{for } n = 2m \hbox{ and } m \hbox{ odd }.
\end{array} \right.
\end{array}
$$  
\end{theorem}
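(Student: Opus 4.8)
The plan is to run Theorems \ref{t1} and \ref{t5} on the standard DG Lie model of the product. The Anick--Quillen model of $S^{m+1}\times S^{n+1}$ is $(L(x,y,z),\partial)$ with $|x|=m$, $|y|=n$, $|z|=m+n+1$, and $\partial x=\partial y=0$, $\partial z=[x,y]$. This exhibits $X$ as the attachment of a single top cell $z$ to the wedge $S^{m+1}\vee S^{n+1}$, whose model is the sub DG Lie algebra $(L(x,y),0)$. Thus $V_{q-1}=Rz$ with $q-1=m+n+1$ and $V_{<n}=Rx\oplus Ry$, so Theorems \ref{t1} and \ref{t5} apply directly and reduce the problem to computing the kernel term $\Hom(Rz,H_{m+n+1}(L(x,y)))$ together with $\C^{q}$ and $\C^{q}_{*}$.

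First I would dispatch the kernel. Since $\partial_{<n}=0$ this term is the free graded Lie algebra $L(x,y)$ in degree $m+n+1$, and a weight count on Lie monomials of degree $am+bn$ shows it vanishes for the relevant $(m,n)$; for instance when $n=2m$ every monomial has degree divisible by $m$, whereas $m+n+1=3m+1$ is not. Hence the kernel is trivial and the short exact sequences collapse to isomorphisms $\E(X_R)\cong\C^{q}$ and $\E_*(X_R)\cong\C^{q}_{*}$.

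The heart of the argument is the computation of $\C^{q}$ and $\C^{q}_{*}$, which reduces to understanding $\E(L(x,y),0)=\aut(L(x,y),0)$; here the homotopy relation is trivial because $\partial=0$. An automorphism must send $x\mapsto\lambda x$ and $y\mapsto\mu y+(\text{decomposables of degree }n)$ with $\lambda,\mu\in R^*$. The only candidate decomposable of degree $n$ is the Whitehead square $[x,x]$, which has degree $2m$, and graded antisymmetry gives $[x,x]=(-1)^{m+1}[x,x]$, forcing $[x,x]=0$ unless $m$ is odd. Thus a genuine extra parameter $y\mapsto\mu y+\nu[x,x]$, $\nu\in R$, appears exactly when $n=2m$ and $m$ is odd. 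Feeding this into Definition \ref{7}: from $B_{q-1}(z)=\{[x,y]\}$ and the identity $[x,[x,x]]=0$ (valid after inverting the small primes permitted by Hypothesis \ref{eq:hyp}), the map $H(\gamma)$ acts on the class $\{[x,y]\}$ as multiplication by $\lambda\mu$, so the compatibility square forces $\xi=\lambda\mu$ and yields $\C^{q}\cong\aut(L(x,y),0)$. Restricting to homology-trivial maps ($\lambda=\mu=1$) gives $\C^{q}_{*}$, which is $0$ generically and the additive $\nu$-line $R$ when $n=2m$ and $m$ is odd. Reassembling produces the four displayed groups.

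I expect the main obstacle to be the degree bookkeeping that isolates the single coincidence $n=2m$ with $m$ odd: one must check simultaneously that $[x,x]$ is nonzero and lands in degree $n$, that $[x,[x,x]]$ vanishes so $H(\gamma)$ scales $\{[x,y]\}$ by $\lambda\mu$, and that $(L(x,y))_{m+n+1}$ is trivial. Verifying that no \emph{other} decomposable can intrude in degree $n$ (equivalently, that $\aut(L(x,y),0)$ has no further generators) is where the constraints on $m,n$ relative to the least non-invertible prime $p$ are genuinely used, and it is the place where care is most needed.
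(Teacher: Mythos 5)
Your route is the paper's own, almost step for step: the same model $(L(x,y,z),\partial z=[x,y])$, the same reduction via Theorems \ref{t1} and \ref{t5} to $\C^{m+n+2}$ and $\C^{m+n+2}_*$, the same observation that $\xi$ is determined by the action on the class $\{[x,y]\}$, and the same identification of the exceptional parameter with $[x,x]$, nonzero in degree $2m$ exactly when $m$ is odd (your parity is the correct one; the paper's proof text momentarily swaps odd and even before arriving at the stated answer). Your one real deviation is cosmetic: where the paper applies Theorem \ref{t1} (resp.\ \ref{t5}) a second time to $(L(x,y),0)$, obtaining a split sequence with kernel $H_n(L(x))$, you compute $\aut(L(x,y),0)$ directly, using that the homotopy relation is trivial when the differential vanishes; the two are equivalent.

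The genuine gap is your first step, the vanishing of the kernel $\Hom(Rz,(L(x,y))_{m+n+1})$. Your illustrative weight count for $n=2m$ (``every monomial has degree divisible by $m$, while $3m+1$ is not'') is vacuous when $m=1$ --- and $m=1$ is forced whenever $R=\Q$ by Hypothesis \ref{eq:hyp}. Concretely, for $(m,n)=(1,2)$, i.e.\ $S^2\times S^3$, one has $(L(x,y))_4=R\langle[x,[x,y]]\rangle\neq 0$; and for $n=m+1$ with $n$ odd (e.g.\ $(m,n)=(2,3)$, admissible once $p\geq 5$), the nonzero class $[y,y]$ sits in degree $2n=m+n+1$. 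So ``hence the kernel is trivial'' fails as stated. The paper hides the same issue behind the phrase ``for degree reasons,'' so you are in its company, but a blind proof must address it. For $(1,2)$ the situation is repairable: the automorphism $z\mapsto z+c[x,[x,y]]$ is homotopic to the identity via the homotopy sending $sz\mapsto \pm c[x,z]$ and $sx,sy\mapsto 0$, because $[x,[x,y]]$ is, up to sign, $\partial[x,z]$ and hence becomes a boundary once $z$ is adjoined --- note this shows the kernel in Theorem \ref{t1} must really be taken modulo boundaries coming from all of $L(V)$, not merely from $L(V_{<n})$, a point on which Proposition \ref{p1} is itself delicate. For $(2,3)$ no such repair exists, since no element of $L(x,y,z)$ of degree $m+n+2$ has boundary hitting $[y,y]$; there the unipotent contribution genuinely survives, so that case must be excluded or handled separately. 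A final small point: your extension is a semidirect product, with $(\lambda,\mu)$ acting on $\nu$ by $\lambda^{2}\mu^{-1}$, so ``reassembling'' to the displayed direct sum $R\oplus(R^*\times R^*)$ needs the same grain of salt as the paper's own statement.
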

\begin{proof}
Let $X = S^{m+1} \times S^{n+1}$.  We can write the Anick model as $(L(u, v, w), \partial)$ where $|u| = m, |v| = n, |w| = n+m+1$ with $\partial(u) = \partial(v) = 0$ and $\partial(w) = [u, v].$   Theorem \ref{t1}   gives a  short exact sequence $$ 0 \to H_{m+n+1}(L(u, v)) \to  \E(L(u,v,w), \partial) \to \C^{m+n+2} \to 0.$$
For degree reasons, $H_{m+n+1}(L(u, v)) = 0.$  Thus $\E(X_R) \cong \C^{m+n+2}$ and we 
compute the latter group.

The group $\C^{m+n+2}$ consists of pairs $(\xi, [\alpha]) \in \aut(R\langle w \rangle) \times\E(L(u, v), 0)$ such that the following diagram commutes:
$$\xymatrix{R\langle w \rangle \ar[d]_{B_{m+n+1}} \ar[rr]^\xi &&  R\langle w \rangle \ar[d]^{B_{m+n+1}} \\
R\langle[u, v] \rangle \ar[rr]^{H_{m+n}(\alpha)} && R\langle [u, v] \rangle}
$$
The map $\xi$ is determined by $H_{m+n}(\alpha).$ Thus $\C^{m+n+2} \cong \E(L(u, v), 0).$
Applying Theorem \ref{t1} again gives a   short exact sequence:
$$ 0 \to  H_{n}(L(u)) \to \E(L(u, v), 0) \to  \C^{n+1} \to 0.$$
The sequence splits since the differential   vanishes.  As above,  $\C^{n+1} \cong  \aut(L(u, v)) \cong R^* \times R^*.$ 
If $n \neq 2m$ or $m $ odd then $H_n(L(u)) = 0.$    Otherwise, for $n = 2m$ and $m$ even,   $H_{n}(L(u)) = R$.  The result for $\E(X_R)$ follows.

The proof for $\E_*(X)_R$ is similar.  Theorem \ref{t5} gives  $\E_*(L(u, v, w), \partial) \cong \C_*^{m+n+2}$. As above, $\C_*^{m+n+2} \cong \E_*(L(u, v); 0).$  Applying Theorem \ref{t5} again gives  $\E_*(L(u, v); 0) \cong  H_n(L(u))$ and the result follows.  \end{proof}

We next give a  general result showing that finiteness of $\E(X_R)$ is not preserved by cell attachments in consecutive degrees: 
\begin{theorem}
\label{finite}  Let $R \subseteq \Q$ and and $X$ in  $\textbf{CW}_{m}^{k+1}$ be as in Theorem \ref{main}.  Suppose $X = X^n \cup (\bigcup_{i=1}^{j} e^{n+1})$ for $j > 0.$   Assume  that the linking homomorphism   $\pi_{n+1}(X^{n+1}, X^{n})_R \to \pi_n(X^n, X^{n-1})_R$ vanishes.    Then  $\E(X^n_R)$   finite implies  $\E(X^{n+1}_R)$ is infinite.
\end{theorem}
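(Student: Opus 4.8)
The plan is to apply the machinery already developed, chiefly Theorem \ref{t1}, to the one-cell-attachment situation at hand. Writing $X = X^{n+1} = X^n \cup_\alpha (\bigcup_{i=1}^j e^{n+1})$, I would pass to the Anick model $(L(V), \partial)$ with $V = V_n \oplus V_{<n}$ where $\dim_R V_n = j$ corresponds to the $j$ attached $(n+1)$-cells. The hypothesis that the linking homomorphism $\pi_{n+1}(X^{n+1}, X^n)_R \to \pi_n(X^n, X^{n-1})_R$ vanishes translates, under the Quillen--Anick dictionary of the previous section, into the vanishing of the linear part $d_n \colon V_n \to V_{n-1}$ of $\partial$. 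Theorem \ref{t1} then yields the short exact sequence
$$
0 \to \Hom\!\big(V_n, H_n(L(V_{<n}))\big) \to \E(L(V), \partial) \to \C^{n+1} \to 0,
$$
and via the identifications $\E(X^{n+1}_R) \cong \E(L(V), \partial)$ and $H_n(L(V_{<n})) \cong \pi_{n+1}(X^n)_R$, this is exactly the first sequence of Theorem \ref{main} in the case $q = n+1$.

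The strategy is then to show the kernel term is an infinite group while keeping $\C^{n+1}$ under control. The key step is to produce at least one nontrivial element of $\Hom(V_n, H_n(L(V_{<n})))$ that generates an infinite subgroup, and this is where I expect the finiteness hypothesis on $\E(X^n_R)$ to enter only indirectly: the real content is that $\Hom(V_n, \pi_{n+1}(X^n)_R)$ is infinite. First I would argue that $\pi_{n+1}(X^n)_R \cong H_n(L(V_{<n}))$ is nonzero. This should follow from the stated hypothesis $\pi_{n+1}(X^{n+1}, X^n)_R \neq 0$ in the corresponding introduction statement (Theorem \ref{finite1}), combined with the vanishing of the linking map: in the long exact sequence of the pair/triple, a nonzero relative group $\pi_{n+1}(X^{n+1}, X^n)_R$ whose image under the boundary (the linking map) vanishes forces a nontrivial contribution to $\pi_{n+1}(X^n)_R$. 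Granting $\pi_{n+1}(X^n)_R \neq 0$ and $V_n \neq 0$ (since $j > 0$), the $\Hom$-module contains a free $R$-module summand, hence is infinite as $R \subseteq \Q$ is an infinite ring.

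To conclude, I would combine the infinitude of the kernel with the exactness of the sequence. Since $\E(X^{n+1}_R)$ surjects onto $\C^{n+1}$ with kernel the infinite group $\Hom(V_n, H_n(L(V_{<n})))$, the group $\E(X^{n+1}_R)$ is itself infinite regardless of the size of $\C^{n+1}$ — an extension of any group by an infinite group is infinite. Here the hypothesis that $\E(X^n_R)$ is finite is what makes the statement nonvacuous and sharp: it exhibits a single cell-attachment turning a finite self-equivalence group into an infinite one, so the finiteness of the base is not merely incidental but is the phenomenon being illustrated. The main obstacle, and the step deserving the most care, is the verification that $\pi_{n+1}(X^n)_R \neq 0$ from the stated hypotheses; everything downstream is a formal consequence of Theorem \ref{t1} and the observation that a nonzero $\Hom$-group into an $R$-module over an infinite subring $R \subseteq \Q$ is infinite.
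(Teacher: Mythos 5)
Your framing via Theorem \ref{t1} is correct, but the engine of your argument --- that the kernel term $\Hom(V_n, H_n(L(V_{<n})))$ is infinite --- has a genuine gap, and it is the opposite of how the paper proceeds. Your key claim is that $\pi_{n+1}(X^{n+1},X^n)_R \neq 0$ together with the vanishing of the linking homomorphism forces $\pi_{n+1}(X^n)_R \neq 0$. This does not follow. Vanishing of the linking map only says that the boundary $\pi_{n+1}(X^{n+1},X^n)_R \to \pi_n(X^n)_R$ lands in the image of $\pi_n(X^{n-1})_R$ (in model terms, $d_n = 0$, i.e.\ the attaching maps are decomposable); it is \emph{not} the vanishing of the boundary itself, and even that would, by exactness, give information about $\pi_n(X^n)_R$ and $\pi_{n+1}(X^{n+1})_R$, never about $\pi_{n+1}(X^n)_R$. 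Concretely, take $m=2$, $X^n = X^3$ a wedge of $3$-spheres, and attach $4$-cells along Whitehead products: then $j > 0$ and the linking map vanishes, yet $\pi_4(X^3)_R \cong H_3(L(V_2)) = 0$, since a free Lie algebra on degree-$2$ generators is concentrated in even degrees (topologically, $\pi_4(S^3) = \Z/2$ dies because Hypothesis \ref{eq:hyp} forces $2$ to be invertible in $R$). So the kernel you rely on can vanish identically. This does not contradict the theorem --- in that example $\E(X^3_R) \cong \mathrm{GL}(V_2,R)$ is infinite --- but it refutes the implication you argue, which, as you yourself note, nowhere uses the finiteness of $\E(X^n_R)$. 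A secondary gap: even when $H_n(L(V_{<n})) \neq 0$, in Anick's range it can be a torsion $R$-module (torsion at primes $\geq p$ is possible), so your assertion that a nonzero $\Hom$-group "contains a free $R$-module summand" and is therefore infinite is also unjustified.

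The structural point you have inverted is that the finiteness hypothesis is the crucial ingredient, not a device to make the statement nonvacuous. The paper's proof applies Theorem \ref{t1} one stage down: writing $V_{<n} = V_{n-1}\oplus V_{<n-1}$, finiteness of $\E(L(V_{<n}),\partial_{<n}) \cong \E(X^n_R)$ forces the kernel $\Hom(V_{n-1}, H_{n-1}(L(V_{<n-1})))$ of that sequence to be trivial, and the paper deduces $H_{n-1}(L(V_{<n-1})) = 0$. Since $d_n = 0$, each $\partial(v)$ for $v \in V_n$ is a decomposable cycle lying in $L(V_{<n-1})$ for degree reasons, hence a boundary there; thus $B_n = 0 \colon V_n \to H_{n-1}(L(V_{<n}))$. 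With $B_n = 0$, every pair $(\xi^a, [\mathrm{id}])$ with $\xi^a = a\cdot\mathrm{id}_{V_n}$ and $a$ a unit of $R$ (an infinite set, since Hypothesis \ref{eq:hyp} with $m < k$ forces $p \geq 3$, so $R$ inverts $2$) satisfies the defining commutative square of Definition \ref{7}, giving infinitely many elements of $\C^{n+1}$. Hence the \emph{quotient} $\C^{n+1}$ is infinite, and therefore so is $\E(X^{n+1}_R)$, irrespective of the kernel. In short: the paper makes the quotient infinite by exploiting the finiteness hypothesis, whereas you tried to make the kernel infinite without using that hypothesis at all, and that strategy cannot succeed.
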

\begin{proof}
   Our hypothesis on the linking homomorphism ensures $d_{n} \colon V_{n} \to V_{n-1}$ is zero.
   Thus given $v \in V_n$, $\partial(v) \in L^{n-1}(V_{< n-1}).$   Since $\E(X^n_R) \cong \mathcal{E}(L( V_{<
   n}), \partial_{< n})$ is finite, applying  Theorem \ref{t1} gives $H_{n-1}(L(V_{< n-1}))=0$.   It follows that  the map $B_n = 0 \colon V_n \to H_{n-1}(L(V_{<n})).$   Given $v \in V_{n}$, set $\gamma= \mathrm{id} \colon (L( V_{<
   n}),\partial)\to (L( V_{<
   n}),\partial)$  and   $\xi^{a}\in \mathrm{aut}(V_{n}), a\in R$ with
$\xi^{a}(v)=av$ for $v\in V_{n}.$
The following diagram is obviously commutative:

$$\xymatrix{V_{n} \ar[d]_{B_n = 0} \ar[rr]^{\xi^a} && V_{n} \ar[d]^{B_n=0} \\
H_{n-1}(L(V_{<n})) \ar[rr]^{H(\alpha) = \mathrm{id}}  && H_{n-1}(L(V_{<n})).} $$ 
Therefore there exists an infinity of pairs
$(\xi^{a},[\mathrm{id}])\in \mathcal{C}_{n+1}$. Since   $\mathcal{C}_{n+1}$  is  infinite, $\E(X^{n+1}_R) \cong \mathcal{E}(L( V_{\leq n},\partial_{\leq n})$  is infinite by Theorem \ref{t1}.
\end{proof}
When $R= \Q$ the result becomes:
\begin{corollary}
\label{c1} Let $X$ be a finite, simply connected  CW complex. Suppose that $\pi_{n+1}(X^{n+1}, X^n)_\Q \neq 0.$ Then $\E(X^n_\Q)$ finite implies $\E(X_\Q^{n+1})$ is infinite.   \qed
\end{corollary}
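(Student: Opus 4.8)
The plan is to derive the result as a corollary of Theorem \ref{finite}, translating the rational homotopy hypothesis into the vanishing-of-linking-homomorphism hypothesis required there. First I would set $R = \Q$, so that Hypothesis \ref{eq:hyp} is satisfied with $m=1$ and any finite $k$, placing us squarely in Quillen's setting where the DG Lie model $(L(V),\partial)$ computes rational homotopy. The key observation is that $\pi_{n+1}(X^{n+1}, X^n)_\Q$ is the \emph{relative} homotopy group, which under the Quillen identification corresponds to the free $R$-module $V_n$ on the $(n+1)$-cells of $X$; thus $\pi_{n+1}(X^{n+1}, X^n)_\Q \neq 0$ is exactly the condition $V_n \neq 0$, i.e.\ that at least one cell is attached ($j > 0$). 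So the cell-attachment hypothesis of Theorem \ref{finite} is met.

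The main point requiring care is the linking-homomorphism hypothesis. Theorem \ref{finite} demands that the linking homomorphism $\pi_{n+1}(X^{n+1}, X^n)_\Q \to \pi_n(X^n, X^{n-1})_\Q$ \emph{vanish}, which corresponds to $d_n \colon V_n \to V_{n-1}$ being zero. Over $\Q$ with $m = 1$ this vanishing is automatic for degree reasons I would need to verify: the linear part $d_n$ of the Quillen differential sends indecomposables to indecomposables, but rationally the reduced homology of a simply connected space injects as the indecomposables $H_*(V)$, and so the linear differential on a \emph{minimal} Quillen model vanishes identically. Hence I would first replace $(L(V),\partial)$ by a minimal model, for which $d \equiv 0$, so that in particular $d_n = 0$ and the linking homomorphism vanishes as required.

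With both hypotheses of Theorem \ref{finite} verified, the conclusion is immediate: $\E(X^n_\Q)$ finite forces $\E(X^{n+1}_\Q)$ infinite. I would present this as a short deduction, invoking Theorem \ref{finite} with $R = \Q$ after noting the two translations above.

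The step I expect to be the main obstacle is justifying the vanishing of the linking homomorphism cleanly, since Corollary \ref{c1} drops that hypothesis entirely whereas Theorem \ref{finite} requires it. The honest resolution is that rational minimality makes $d_n = 0$ automatic, so the hypothesis is vacuous over $\Q$; I would make sure to phrase this as passing to the minimal Quillen model rather than leaving it implicit, and to confirm that minimality is available in Quillen's category for every simply connected finite complex, which it is.
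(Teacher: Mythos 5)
Your overall strategy---specialize Theorem \ref{finite} to $R=\Q$---is exactly what the paper intends (it states the corollary with a bare \qed\ as the $\Q$-case of Theorem \ref{finite}), but the step you yourself flagged as the main obstacle is where your argument genuinely breaks. The linking homomorphism $\pi_{n+1}(X^{n+1},X^n)_\Q \to \pi_n(X^n,X^{n-1})_\Q$ is an invariant of the skeletal filtration of the given $X$ (it is the rationalized cellular chain boundary), not of the rational homotopy type, and it does \emph{not} vanish in general over $\Q$: for $X = S^n\cup_2 e^{n+1}$ it is multiplication by $2$ on $\Q$. Minimality produces a \emph{different} model, not a new property of the old one, and the dictionary used throughout Section \ref{sec:Wh}---$V_{q-1}$ free on the $q$-cells, $L(V_{<n})$ modeling $X^n$, the linear part $d$ realizing the linking map---is a feature of the \emph{cellular} Quillen/Anick model only. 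Passing to the minimal model destroys both halves of your translation at once. First, in the minimal model $V'_n\cong H_{n+1}(X;\Q)$, which can vanish even when $\pi_{n+1}(X^{n+1},X^n)_\Q\neq 0$ (again $S^n\cup_2 e^{n+1}$: the cell kills rational homology), so the hypothesis $j>0$ of Theorem \ref{finite} no longer transfers. Second, the sub-DG Lie algebra on generators of degree $<n$ of the minimal model no longer models $X^n$: for $S^2\cup_2 e^3$ the minimal model is $0$, whose truncation models a point, while $X^2=S^2$. So applying Theorem \ref{finite} to the minimal model would at best yield a statement about $\E$ of a ``rational homology skeleton,'' not about $\E(X^n_\Q)$ as the corollary asserts; you cannot verify $j>0$ in the cellular model and $d_n=0$ in the minimal model and then invoke a theorem that needs both hypotheses for a single model.

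To see what a repaired argument must confront, run Theorem \ref{t1} (which needs no linking hypothesis) on the cellular model: $0\to \Hom(V_n,\pi_{n+1}(X^n)_\Q)\to \E(X^{n+1}_\Q)\to \C^{n+1}\to 0$. If $\pi_{n+1}(X^n)_\Q\neq 0$ the kernel is a nonzero $\Q$-vector space and you are done; if $\ker B_n\neq 0$, scaling $\ker B_n$ produces infinitely many pairs $(\xi,[\mathrm{id}])\in\C^{n+1}$, exactly the mechanism of the paper's proof of Theorem \ref{finite}, where the vanishing of the linking map together with finiteness of $\E(X^n_\Q)$ is what forces $B_n=0$. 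The residual case---$B_n$ injective and $\pi_{n+1}(X^n)_\Q=0$, which can only occur when $d_n\neq 0$---is untouched by your proposal, and there the displayed sequence makes $\E(X^{n+1}_\Q)$ finite whenever $\E(X^n_\Q)$ is. So a correct proof must either rule out this configuration or retain the linking hypothesis; your minimal-model reduction does neither, and (for what it is worth) the paper's unproved \qed\ leaves exactly the same gap open.
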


 Finally, we show $\E(X_R)$ is   infinite for CW complexes of small dimension.

 \begin{theorem} \label{thm:4}  Let $X$ be a simply-connected finite CW complex  of dimension $\leq 5.$  Let $ R \subseteq \Q$ have least invertible prime $p \geq 7.$  If $p$ is finite, assume the linking homomorphisms $\pi_{r+1}(X^{r+1}, X^{r})_R\to \pi_{r}(X^{r}, X^{r-1})_R$ vanish for $r =  2,3,  4$.     Then $\E(X_R)$ is infinite. \end{theorem}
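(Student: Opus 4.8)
The plan is to use the structure theory developed in the previous sections to show that the group $\E(X_R)$ is built up, skeleton by skeleton, from pieces that become infinite once a nontrivial cell attachment occurs in low degrees with vanishing linking. The key engine is Theorem~\ref{finite}: whenever we attach cells in degree $n+1$ to a skeleton $X^n$ with $\E(X^n_R)$ finite and with vanishing linking homomorphism $\pi_{n+1}(X^{n+1},X^n)_R\to\pi_n(X^n,X^{n-1})_R$, the passage to $\E(X^{n+1}_R)$ produces a $\Hom$-kernel of positive rank (an infinity of pairs $(\xi^a,[\mathrm{id}])$), hence $\E(X^{n+1}_R)$ is infinite. Since $X$ has dimension $\leq 5$ and is simply connected, its cells live only in degrees $2,3,4,5$, so the whole of $X$ is assembled through attachments in exactly the degrees $r=2,3,4$ covered by the hypothesis on linking homomorphisms. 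Because $p\geq 7$, every prime $\leq 5$ is invertible in $R$, so the relevant low-degree homotopy of spheres $\pi_*(S^i)_R$ is controlled and torsion-free in the tame range, and Hypothesis~\ref{eq:hyp} is satisfied with $m=1$, $k=4$.

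\emph{First I would} dispose of trivial cases and set up the induction. If $X$ is a point, a single sphere, or more generally has $\E(X_R)$ already infinite at some skeleton, we are done; so we may assume $\E(X^r_R)$ is finite for the relevant $r$ and look for the first degree at which a cell is attached. Let $n+1$ be the smallest degree in $\{2,3,4,5\}$ in which $X$ has cells beyond the trivial $1$-skeleton. \emph{The core step} is then to verify the two hypotheses of Theorem~\ref{finite} at that stage: the linking homomorphism in degree $n$ vanishes (given for $r=2,3,4$ by assumption; and automatic in the appropriate boundary cases), and $\E(X^n_R)$ is finite. For the base of the induction, the first nontrivial skeleton is a wedge of spheres in a single degree, whose self-equivalence group localizes to a product of $\mathrm{GL}$-type groups over $R$ — but here I must be careful, since $\mathrm{GL}_j(R)$ need not be finite. \textbf{This is where the argument needs its real input:} I would argue that after the very first attachment the kernel $\bigoplus \pi_q(X^n)_R$ from Theorem~\ref{main}(1) already contributes an infinite summand, so rather than requiring $\E(X^n_R)$ finite I directly exhibit infinitude of $\E(X^{n+1}_R)$ via the $\xi^a$ construction of Theorem~\ref{finite}, provided the relevant relative homotopy group $\pi_{n+1}(X^{n+1},X^n)_R$ is nonzero.

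\emph{The main obstacle} I anticipate is ensuring that at the first genuine cell attachment the hypothesis ``$\E(X^n_R)$ finite'' of Theorem~\ref{finite} can be arranged or circumvented: the full group $\E(X^n_R)$ contains $\mathrm{GL}$-factors over $R$ which are infinite whenever $R\neq\Z$ and a generator has infinite order, so a direct appeal to Theorem~\ref{finite} could fail precisely because its finiteness input is already violated. The resolution is favorable: if $\E(X^n_R)$ is \emph{infinite}, we are immediately done by the surjection $\E(X_R)\twoheadrightarrow\E(X^n_R)$-type naturality (more precisely, by iterating the surjections $\Psi_q$ of Proposition~\ref{p3}, which realize $\E(X^{r+1}_R)$ as an extension with quotient a subgroup involving $\E(X^r_R)$); and if it is finite, Theorem~\ref{finite} applies verbatim. \emph{So the proof dichotomizes} on whether the first attachment degree already yields an infinite equivalence group, and in either branch infinitude propagates upward through the extensions of Theorems~\ref{t1} and~\ref{finite} to $\E(X_R)=\E(X^5_R)$. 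The remaining care is purely bookkeeping: checking that the tame-range condition $k<\min(m+2p-3,mp-1)$ holds throughout with $m=1$, $k=4$, $p\geq7$, which it does since $\min(1+2\cdot7-3,\,7-1)=\min(12,6)=6>4$.
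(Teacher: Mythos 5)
Your proposal has a genuine gap at its core step: the claimed upward propagation of infinitude. You assert that if $\E(X^n_R)$ is infinite then $\E(X^{n+1}_R)$ is infinite "by the surjection $\Psi_q$," but Theorem~\ref{t1} only gives a surjection of $\E(L(V),\partial)$ onto $\C^q$, and $\C^q$ is merely a \emph{subgroup} of $\aut(V_{q-1})\times\E(L(V_{<n}),\partial_{<n})$: it consists of exactly those pairs $(\xi,[\gamma])$ for which $H(\gamma)\circ B_{q-1}=B_{q-1}\circ\xi$. An infinite supply of classes $[\gamma]\in\E(X^n_R)$ gives no lower bound on $\C^q$, because nothing guarantees that more than finitely many $[\gamma]$ admit a compatible $\xi\in\aut(V_{q-1})$ (the projection of $\C^q$ to the $\E$-factor need not be surjective, nor have infinite image). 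Indeed this failure is the whole phenomenon the paper is probing: by Costoya--Viruel, every finite group arises as $\E(X_\Q)$ for spaces whose lower skeleta certainly have infinite self-equivalence groups, so cell attachment genuinely destroys infinitude and your dichotomy collapses in the "infinite skeleton" branch. The "finite skeleton" branch also cannot be iterated: after one application of Theorem~\ref{finite} the finiteness hypothesis fails at the next stage, and you are back to needing the false propagation. Your fallback --- that the kernel $\Hom(V_{q-1},H_{q-1}(L(V_{<n})))$ contributes an infinite summand --- is likewise not automatic: that homology group can vanish, and the paper must confront precisely this in the case $H_4(L(V_{\leq 3}))=0$.

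The paper's actual proof avoids skeleton-by-skeleton induction entirely and is an explicit low-dimensional case analysis in the Anick model $(L(V),\partial)$, where the linking hypothesis forces the linear part $d=0$. For $X=X^3$, degree reasons force $\partial=0$, so $\E(X_R)\cong\aut(V)$, infinite since $R^*$ is infinite (all primes $<7$ are invertible). For $X=X^4$ and $X=X^5$ it exhibits, inside $\C^4$ resp.\ $\C^5$, an infinite one-parameter family of \emph{compatible} weighted scaling pairs: e.g.\ $\alpha^a=a$ on $V_{\leq 2}$, $\xi^{a^2}=a^2$ on $V_3$, using $\partial_3(V_3)\subseteq[V_1,V_1]$; and for $X^5$, after reducing to $H_4(L(V_{\leq 3}))=0$ (otherwise the $\Hom$-kernel of Theorem~\ref{t1} is already infinite) it deduces $\ker\partial_3=0$, $\partial_3(V_3)=[V_1,V_1]$, $H_3(L(V_{\leq 3}))\cong[V_2,V_1]$, and takes $\alpha^a=a$ on $V_1$, $a^2$ on $V_2\oplus V_3$, $\xi^{a^3}=a^3$ on $V_4$. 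This explicit construction of elements of $\C^q$ adapted to the bracket structure of $\partial$ is the real content your proposal is missing; without it, neither branch of your dichotomy reaches $\E(X^5_R)$.
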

\begin{proof}  Let $(L(V), \partial)$ denote the Anick  model for $X$.  Our hypothesis on the linking homomorphism implies $d = 0.$  Since $X_R$ is not contractible, $V \neq 0.$ 

When  $X = X^3$ then for degree reasons $\partial = 0.$    It follows that $\E(X^3_R) \cong \aut(V)$ which is infinite since $V \neq 0.$ 

Next suppose $X = X^4.$  By Theorem \ref{main} (1), 
 it suffices to show that $\C^4$ is infinite.  Here  $V = V_3 \oplus V_2 \oplus V_1$ with $\partial(V_3) \subseteq [V_1, V_1]$ and at least one $V_i \neq 0.$   The group $\C^4$ consists of pairs $(\xi, \alpha)$ where $\alpha \colon L(V_2 \oplus V_1) \to L(V_2 \oplus V_1)$ is an automorphism and $\xi \colon V_3 \to V_3$ makes the diagram commute: 
$$ \xymatrix{V_3 \ar[rr]^{\xi} \ar[d]_{\partial_3} && V_3 \ar[d]^{\partial_3} \\
[V_1, V_1] \ar[rr]^{\alpha} && [V_1, V_1]}
$$
Given $a \neq 0$, define $\alpha^a(v) = av $ for $V_{\leq 2}$ 
Define $\xi^{a^2}$ by $\xi^{a^2}(v) = a^2v$ for $v \in V_3.$  
This gives  an infinity of distinct pairs $(\alpha^a, \xi^{a^2})$ in $\C^{4}.$

For the case $X = X^5$  we have  $V = V_{\leq 4}$.    
Again   $\partial_{3}(V_{3}) \subseteq [V_{1},V_{1}]$ by minimality. 
We identify $H_4(L(V_{\leq 3}))$ as vector space: 
$$ H_{4}(L(V_{\leq 3})) =  [\ker \, \partial_{3}, V_1]\oplus
[V_{2},V_{2}]\oplus \frac{[V_{2},[V_{1},V_{1}]]}{ [\partial_{3}(V_3), V_2]}
\oplus\frac{[V_{1},[V_{1},[V_{1},V_{1}]]]}{[\partial_3(V_3), [V_1, V_1]]}. $$
By Theorem \ref{t1},  if $H_4(L(V_{\leq 3})) \neq 0$ then $\E(X^5_\Q)$ is infinite.
Thus we may assume $H_4(L(V_{\leq 3})) = 0$ which forces 
$\dim V_2 \leq 1.$  If $V_1 =0$ then $\partial =0$ and $\E(X_R) \cong \aut(V)$ is infinite.  So assume $V_1 \neq 0.$ Then we must have $\ker \, \partial_3 = 0$ and $\partial_3(V_3) = [V_1, V_1]$.
Again, it suffices to show $\C^5$ is infinite.  We note that 
$$H_{3}(L(V_{\leq 3})) =  \ker\, \partial_3 \oplus 
[V_{2},V_{1}]\oplus \frac{[V_{1},[V_{1},V_{1}]]}{  
[ \partial_{3}(V_3), V_{1}]} \cong  [V_2, V_1]
$$
 Then $\C^5$ is the set of pairs $(\xi, [\alpha])$ with $\xi \in \aut(\Q \langle w \rangle)$ and $\alpha \in \aut(L(V_{\leq 3}), \partial_{\leq 3})$ making the diagram commute:
  $$ \xymatrix{  V_4 \ar[rr]^{\xi} \ar[d]_{\partial_4} && V_4 \ar[d]^{\partial_4} \\
  [V_2, V_1]  \ar[rr]^{H(\alpha)} && [V_2, V_1].}$$
  Given $a \in R^*$ define a DG Lie map $\alpha^a \colon (L(V_{\leq 3}), \partial_{\leq 3}) \to  (L(V_{\leq 3}), \partial_{\leq 3})$ by setting 
  $$\alpha(v) = av \hbox{ for } v \in V_1 \hbox{ and }   \ \ \alpha(u) = a^2u \hbox{ for } u \in V_3 \oplus V_2,$$
  and extending.   We then obtain an infinity of pairs $(\xi^{a^3}, [\alpha^a]) \in \C^5$ where $\alpha^{a^3}(x) = a^3x$  for $x \in V_4.$ 
  \end{proof}
We conclude by proposing a problem. By Costoya-Viruel \cite{CV}, every finite group $G$ occurs as $\E(X_\Q)$.  
The construction of $X$ for a given $G$ requires cells (cohomology classes) in a wide range of dimensions.  This suggests the following: 
\begin{problem}  Given $R \subseteq \Q$, find the smallest $n \geq 1$ such that there exists a simply connected CW complex $X$ with $\dim X \leq n$,  $X_R$ non-contractible and $\E(X_R)$ is finite.
\end{problem}
 \bibliographystyle{amsplain}

\end{document}